\newtheorem{thm}{Theorem}[section]
\newtheorem{lemma}[thm]{Lemma}
\newtheorem{prop}[thm]{Proposition}
\newtheorem{cor}[thm]{Corollary}
\theoremstyle{definition}
\newtheorem{ex}{Example}
\def\R{\mathbb{E}}      \def\R{\mathbb{R}}
\def\Z{\mathbb{Z}}      \def\N{\mathbb{N}}
\def\T{\mathbb{T}}
\def\Om{\Omega}         \def\1{\mathds{1}}
\newcommand{\digits}{\{d_1, \dots, d_k\}}
\newcommand{\ArCantor}[2][\digits]{{C^{#1}_{#2}}}
\newcommand{\PartCantor}[2][\digits]{{\mathscr C^{#1}_{#2}}}
\newcommand{\Ellipsephic}[2][\digits]{{\mathcal E^{#1}_{#2}}}
\newcommand{\Level}[2][j]{[#2 ]_{#1}}
\newcommand{\ta}[0]{\theta}
\newcommand{\ld}[0]{\lambda}
\newcommand{\vep}[0]{\varepsilon}
\newcommand{\lsm}[0]{\lesssim}
\newcommand{\wh}[1]{\widehat{#1}}
\newcommand{\wc}[1]{\widecheck{#1}}
\newcommand{\wt}[1]{\widetilde{#1}}
\newcommand{\st}[1]{\substack{#1}}
\newcommand{\nms}[1]{\| #1 \|}
\newcommand{\E}[0]{\mathcal{E}}
\DeclareMathOperator*{\argmax}{arg\,max}
\theoremstyle{remark}
\newtheorem{rem}{Remark}
\newenvironment{enumerate-text}
{\begin{enumerate}
		\addtolength{\itemsep}{5pt}
		}
	{\end{enumerate}}
\title{Decoupling for fractal subsets of the parabola}
\author[A. Chang]{Alan Chang}
\address[AC]{Department of Mathematics, Princeton University, Princeton, NJ 08544, USA}
\email{alanchang@math.princeton.edu}
\author[J. de Dios]{Jaume de Dios Pont}
\address[JDP]{Department of  Mathematics,  University  of  California  Los  Angeles,  Portola Plaza 520, Los  Angeles,
CA 90095, USA}
\email{jdedios@math.ucla.edu}
\author[R. Greenfeld]{Rachel Greenfeld}
\address[RG]{Department of  Mathematics,  University  of  California  Los  Angeles,  Portola Plaza 520, Los  Angeles,
CA 90095, USA}
\email{greenfeld@math.ucla.edu}
\author[A. Jamneshan]{Asgar Jamneshan}
\address[AJ]{Department of Mathematics, Ko\c{c} University, Rumelifeneri Yolu, 34450, Sariyer, Istanbul, Turkey}
\email{ajamneshan@ku.edu.tr}
\author[Z.K. Li]{Zane Kun Li}
\address[ZKL]{Department of Mathematics, Indiana University Bloomington, Bloomington, IN-47405, USA}
\email{zkli@iu.edu}
\author[J. Madrid]{Jos\'e Madrid}
\address[JM]{Department of  Mathematics,  University  of  California  Los  Angeles,  Portola Plaza 520, Los  Angeles,
CA 90095, USA}
\email{jmadrid@math.ucla.edu}
\begin{document}
\begin{abstract}
We consider decoupling for a fractal subset of the parabola. We reduce studying $l^{2}L^{p}$ decoupling
for a fractal subset on the parabola $\{(t, t^2) : 0 \leq t \leq 1\}$
to studying $l^{2}L^{p/3}$ decoupling for the projection of this subset to the interval $[0, 1]$.
This generalizes the decoupling theorem of Bourgain-Demeter in the case of the parabola.
Due to the sparsity and fractal like structure, this allows us to improve upon Bourgain-Demeter's decoupling theorem
for the parabola.
In the case when $p/3$ is an even integer we derive theoretical and computational
tools to explicitly compute the associated decoupling constant for this projection to $[0, 1]$.
Our ideas are inspired by the recent work on ellipsephic sets by Biggs \cite{biggs-moment, biggs-quadratic} using nested
efficient congruencing.
\end{abstract}
\maketitle

\section{Introduction}
Fix an integer $q \geq 3$, not necessarily a prime, and let $\delta(i) := 1/q^{i}$, $i \geq 0$.
Let $C_{0} := [0, 1]$. To construct level $i$, we partition $C_{i - 1}$ into
intervals of length $\delta(i)$, remove
some of them, and denote by $N(i)$ the number of unremoved intervals.
We associate $C = \bigcap_{i \geq 0} C_{i}$ with its levels $C_i$.
For an interval $I$ with $|I| = \delta(i)$, $\delta(i) > \delta(j)$, $P_{\delta(j)}(I \cap C_j)$
will denote the collection of intervals that make up $C_j$ which are contained in $I$.
We also let $P_{\delta(i)}(C_i) = P_{\delta(i)}([0, 1] \cap C_i)$ be the collection of intervals of length $\delta(i)$ that make up $C_i$ and so $N(i) = \# P_{\delta(i)}(C_i)$.

We call $C = \bigcap_{i \geq 0} C_{i}$ a \emph{generalized Cantor set} and $C_{i}$  a \emph{generalized Cantor set of level $i$}, when  the following three conditions are satisfied:
\begin{itemize}
\item  $N(i + j) = N(i)N(j)$.
\item  $C_{i} \subset C_{i - 1}$.
\item  The level $C_{i}$ is similar to level $C_{i - 1}$. More precisely, for every interval $I\in P_{\delta(i-1)}(C_{i-1})$, the set $I\cap C_i$ is a translate of $q^{-1}C_{i-1}$.
\end{itemize}

By multiplicativity of $N(\cdot)$, given an $I \in P_{\delta(i)}(C_i)$
and $i < j$, the number of intervals in $P_{\delta(j)}(C_j)$ that are contained in $I$ is $N(j - i)$.
Additionally,
\begin{align}\label{deltaN}
\delta(i)^{-\dim(C)} = N(i)
\end{align}
where $\dim(C)$ is the Hausdorff dimension of $C$.
Note that in our definition, it is possible to let $N(i) = q^{i}$ and so $C_{i}$
is the partition of $[0, 1]$ into intervals of length $1/q^{i}$.

The traditional middle-thirds Cantor set has $q = 3$ and $N(i) = 2^{i}$.
To avoid writing \emph{generalized Cantor set} repeatedly, we will just call the above constructed set $C$, a Cantor set and $C_i$, a level of Cantor set.
A simple modification of our argument also allows it to work with asymmetric Cantor sets, however in order to simplify the arguments notation-wise, we do not pursue such a goal here.

Given a level of a Cantor set $C_{i}$, for each interval $I \in P_{\delta(i)}(C_i)$, let $\ell_{I}$
denote the left endpoint of $I$ and
\begin{align*}
\Om_{I} := \{\xi \in \R^2: \ell_{I} \leq \xi_1 \leq \ell_{I} + \delta(i), |\xi_2 - (2\ell_{I} + \delta(i))(\xi_1 - \ell_{I}) - \ell_{I}^{2}| \leq \delta(i)^{2}\}.
\end{align*}
Note that $\Om_{I}$ is a $O(\delta(i)) \times O(\delta(i)^{2})$ parallelogram
that covers and is covered by a $O(\delta(i)^{2})$ neighborhood of the piece of parabola above $I$.

For an interval $I$ and $f: \R \rightarrow \R$, let $f_{I}$ be defined such that $\wh{f_{I}} = \wh{f}1_{I}$. Next for a region $\ta$ and $f: \R^2 \rightarrow \R$, let $f_{\ta}$ be defined
such that $\wh{f_{\ta}} = \wh{f}1_{\ta}$.

Finally, throughout this paper, for two nonnegative expressions $X$ and $Y$ we use the notation $X \lsm Y$ or $Y \gtrsim X$ to denote the bound $X \leq CY$ for some absolute constant $C >0$.
If there are subscripts, for example, $X \lsm_{p} Y$, then we mean that there exists a constant $C_{p}>0$ depending only
on $p$ such that $X \leq C_{p}Y$. Additionally $X \sim Y$ means that $X \lsm Y$ and $Y \lsm X$.

\subsection{Decoupling for $C_i$ on the parabola}
Fix a Cantor set $C$ and its levels $C_i$.
For $p \geq 2$, let $D_{p}(\delta(i))$ be the best constant such that
\begin{align*}
\nms{\sum_{J \in P_{\delta(i)}(C_{i})}f_{\Om_{J}}}_{L^{p}(\R^2)} \leq D_{p}(\delta(i))(\sum_{J \in P_{\delta(i)}(C_{i})}\nms{f_{\Om_J}}_{L^{p}(\R^2)}^{2})^{1/2}
\end{align*}
for all Schwartz functions $f$ which are Fourier supported in $\bigcup_{J \in P_{\delta(i)}(C_i)}\Om_{J}$.

In the case when the Cantor set $C$ is the whole interval $[0, 1]$ and $C_i$ is the partition of $[0, 1]$
into intervals of length $\delta(i)$, we see that $D_{p}(\delta(i))$ is just the regular $l^{2}L^p$ decoupling
constant for the parabola considered by Bourgain-Demeter in \cite{BD, BD-studyguide} and so we immediately have
$D_{p}(\delta(i)) \lsm_{\vep} \delta(i)^{-\vep}(1 + \delta(i)^{-(\frac{1}{2} - \frac{3}{p})})$.
Our main result is the following generalization of Bourgain-Demeter's parabola decoupling theorem.

\begin{thm}\label{main}
Fix $p \geq 2$ and a Cantor set $C$ and its levels. Let $\kappa_{p}(C)$ be the smallest number such that
\begin{align}\label{maininput}
\nms{\sum_{J \in P_{\delta(i)}(C_{i})}f_{J}}_{L^{p}(\R)} \lsm_{p, \vep, \dim(C), N(1)} N(i)^{\kappa_{p}(C) + \vep}(\sum_{J \in P_{\delta(i)}(C_{i})}\nms{f_{J}}_{L^{p}(\R)}^{2})^{1/2}
\end{align}
for all Schwartz functions $f: \R \rightarrow \R$ and all $i$. Then the $l^{2}L^{3p}$ decoupling constant for $C$ is such that for every $\vep > 0$,
$$D_{3p}(\delta(i)) \lsm_{p, \vep, \dim(C), N(1)} N(i)^{\kappa_{p}(C) + \vep}.$$
\end{thm}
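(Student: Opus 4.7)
My strategy is to reduce the claimed 2D $l^2 L^{3p}$ parabola decoupling for $C_i$ to the hypothesized 1D $l^2 L^p$ decoupling \eqref{maininput} by combining two ingredients. The first is the cubing identity
\begin{equation*}
\nms{F}_{L^{3p}(\R^2)}^3 = \nms{F^3}_{L^p(\R^2)},
\end{equation*}
applied to $F := \sum_{J\in P_{\delta(i)}(C_i)}f_{\Om_J}$; this is what accounts for the jump from exponent $p$ in the hypothesis to exponent $3p$ in the conclusion. The second is Fubini-slicing in the $x_1$-variable. Expanding the cube,
\begin{equation*}
F^3 = \sum_{J_1,J_2,J_3} f_{\Om_{J_1}}f_{\Om_{J_2}}f_{\Om_{J_3}},
\end{equation*}
and by the parabolic geometry of the $\Om_J$, each summand is Fourier supported in the thin curved parallelepiped $\Om_{J_1}+\Om_{J_2}+\Om_{J_3}$, with $\xi_1$-projection contained in the short interval $[\ell_{J_1}+\ell_{J_2}+\ell_{J_3},\,\ell_{J_1}+\ell_{J_2}+\ell_{J_3}+3\delta(i)]$.

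To extract the decoupling factor $N(i)^{\kappa_p(C)+\vep}$, I would write $\nms{F^3}_{L^p(\R^2)}^p = \int \nms{F^3(\cdot,x_2)}_{L^p(\R)}^p\,dx_2$ and, for a.e.\ fixed $x_2$, apply \eqref{maininput} to the 1D slice $x_1\mapsto F^3(x_1,x_2)$. Afterwards, Minkowski's inequality (valid for $p\geq 2$) exchanges $L^p_{x_2}$ with the $\ell^2_J$-sum, and H\"older separates each triple product back into three individual factors of the form $\nms{f_{\Om_J}}_{L^{3p}(\R^2)}$, reassembling the $\ell^2$-square-sum on the right-hand side of the conclusion.

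The main technical obstacle is that the $\xi_1$-support of $F^3$ naturally sits on the triple sumset of the left endpoints $\{\ell_J:J\in P_{\delta(i)}(C_i)\}$, which is in general strictly larger than the set of left endpoints itself and may carry its own, distinct, 1D decoupling behavior; hence the hypothesis \eqref{maininput} cannot be applied verbatim to the slices. I would resolve this by induction on the Cantor level $i$, exploiting the self-similar identity $N(i+j)=N(i)N(j)$ together with one step of parabolic rescaling at scale $\delta(1)$ per iteration. Each step relates the problem for $C_i$ to the same problem for $C_{i-1}$ rescaled inside each of the $N(1)$ parent intervals, paying a factor $N(1)^{\kappa_p(C)+\vep}$ from \eqref{maininput} applied at level $1$; iterating $i$ times produces the claimed overall bound $N(i)^{\kappa_p(C)+\vep}$. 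This nested-scales induction is where the analogy with Biggs' nested efficient congruencing is most directly felt.
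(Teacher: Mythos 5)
Your opening move --- the cubing identity $\|F\|_{L^{3p}(\R^2)}^3 = \|F^3\|_{L^p(\R^2)}$ together with $x_1$-slicing --- is the right way to see why the jump $p \to 3p$ should be possible, and you have correctly put your finger on the central obstacle: the $\xi_1$-support of $F^3$ lives on the triple sumset of the Cantor frequencies, to which \eqref{maininput} simply does not apply. Unfortunately, the proposed fix does not close this gap, and the paper's actual proof is quite different in structure.

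The induction ``one level at a time, paying $N(1)^{\kappa_p(C)+\vep}$ per step via \eqref{maininput} at $i=1$'' fails for two compounding reasons. First, \eqref{maininput} at level $1$ is essentially vacuous: the implicit constant is allowed to depend on $N(1)$ and $\dim(C)$, so it can absorb any fixed power of $N(1)$, and iterating the estimate $i$ times multiplies the (unbounded) implicit constant by itself $i$ times, destroying the bound. The almost-multiplicativity $D_{3p}(\delta(i+j))\leq D_{3p}(\delta(i))D_{3p}(\delta(j))$ (Corollary \ref{almostmult} in the paper) shows exactly why a scale-$\delta(1)$ decoupling bound of $N(1)^{\kappa_p(C)}$ would suffice --- but establishing such a bound is morally the content of the theorem, not a consequence of \eqref{maininput}, so the argument is circular. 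Second, and more to the point, the one-step induction does nothing to address the sumset problem you identified: the frequencies of the cubed function still land on $C_i+C_i+C_i$, not on $C_i$, and nothing in your proposal converts a decoupling over the sumset into a decoupling over the Cantor set itself.

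What the paper does instead is introduce a bilinear (really $(F,G,G)$-trilinear) constant $M_p(j,k,i_1,i_2)$ measuring decoupling for products where one factor $F$ lives over an interval $I_1$ \emph{away from the origin} and the other factor $G$ lives over an interval $I_2$ \emph{near the origin}, separated by $\gtrsim \delta(k)$. The bilinear reduction (Lemma \ref{bilinear}) passes from $D_{3p}$ to $M_p$ at the cost of a Bourgain--Guth-style broad/narrow split. The key estimate (Lemma \ref{key}) is where \eqref{maininput} enters, and the transversality is precisely what kills the sumset: since $G$ (and hence $G^2$) is Fourier-concentrated near $\xi_1 = 0$ with tiny $\xi_2$-extent, the product $F_J G^2$ has $\xi_2$-support in a horizontal strip around $\gamma_J^2$, where $\gamma_J$ is the center of a Cantor interval. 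Because $\gamma_J$ stays $\gtrsim \delta(k)$ away from the origin, the squaring map $\gamma_J \mapsto \gamma_J^2$ is bi-Lipschitz there, so the $\xi_2$-frequencies are still (an affine image of) the Cantor set, not a sumset, and \eqref{maininput} applies after a harmless dilation and translation. Finally, the iteration is not a naive ``one scale per step'': it is a scale-doubling bootstrap, iterating through scales $i, 2i, 4i, \ldots, 2^{K+1}i = j$ and closing via a contradiction against the putative optimal exponent $\lambda$, which is what keeps the implicit constants from compounding. Without some version of this bilinear/transversal mechanism, I do not see how the slicing-plus-cubing route can be made to work for general Cantor sets.
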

This theorem is proven in Section \ref{sec_main}.
The case of $p = 2$ is just an immediate application Bourgain-Demeter's result on the parabola and \eqref{deltaN}. For $p > 2$,
due to the sparsity and fractal structure of $C$,
we can do better than directly applying Bourgain-Demeter (see the examples summarized later or alternatively
written in more detail in Section \ref{sub:Examples}).

In the case when $C$ is the whole interval, Theorem \ref{main}
gives a sharp theorem for decoupling for the parabola.
However, whether Theorem \ref{main} is sharp for arbitrary Cantor sets $C$ is an area to be explored.
Note that even if the $\lsm_{p, \vep, \dim(C), N(1)}$ can be replaced with $\lsm_{p, \vep}$ (as is the case with our examples in Section \ref{sub:Examples}),
the proof of Theorem \ref{main} adds in implicit constants that depends on $\dim(C)$ and $N(1)$.

The proof of Theorem \ref{main} is inspired from \cite{biggs-quadratic}, in particular
one can think of \cite[(1.2)]{biggs-quadratic} as an $l^{2}L^{2t}$ decoupling theorem on the line for which we then upgrade
to an $l^{2}L^{6t}$ decoupling theorem on the parabola. However, Theorem \ref{main}
is more general than \cite{biggs-quadratic} since it is valid for arbitrary Cantor sets as defined on the first page
rather than ellipsephic sets.
Additionally, similar to the relation between \cite{biggs-moment} and \cite{biggs-quadratic}, given a Cantor set $C$ and its levels, one can use ideas from \cite{GLYZK} to write
a version of Theorem \ref{main} which upgrades $l^{2}L^{p}$ decoupling on the line to $l^{2}L^{k(k + 1)p/2}$ decoupling on the moment curve $\xi \mapsto (\xi, \xi^2, \ldots, \xi^k)$.
However in this paper we only consider the case of the parabola.

Analogous to how \cite{biggs-quadratic} is related to Wooley's nested efficient congruencing \cite{nested},
the proof of Theorem \ref{main} is similar in style to the proof of decoupling for the parabola found in \cite{GLYZK, li-efficient}
though here we more closely follow Tao's exposition \cite{tao-247} based off these two papers.
For more discussion on decoupling interpretations of efficient congruencing, see \cite{GLY, GLYZK, li-efficient} which are decoupling interpretations
of the efficient congruencing papers \cite{heathbrown}, \cite{nested}, and \cite[Section 4.3]{pierce}, respectively.

Demeter in \cite{demeter-cantor} generalized decoupling for the parabola in a different way. He considered
the partition that arises from the set $\mathcal{C}_{\alpha, n} = \{0, \alpha\}+ \{0, \alpha^2\} + \cdots + \{0, \alpha^n\}$ for $0 \leq \alpha \leq 1/2$
and proved $l^{2}L^{p}$, $2 < p < 6$ decoupling estimates for the parabola decoupling question associated to this partition.
The case $\alpha = 1/2$ corresponds to the uniform partition of $[0, 1]$ into intervals of length $2^{-n}$.
More precisely, he showed that the decoupling constant is $O_{\vep}(2^{n\vep})$ uniform in $\alpha$.
The difference between Demeter's result and our work here is that he starts with the whole interval $[0, 1]$ and decouples into a self similar partition
of $[0, 1]$ built from $\mathcal{C}_{\alpha, n}$ while in our work we start with a sparse subset of $[0, 1]$ and decouple into its individual pieces.
Additionally, the intervals in his partition have varying lengths while here our intervals all have the same length.
See also \cite{HK95} for a much stronger square function estimate for a lacunary partition of $[0, 1]$,
the same comments on \cite{demeter-cantor} also apply here.

\subsection{Decoupling for $C_i$ on $[0, 1]$}
Theorem \ref{main} reduces studying $D_{3p}(\delta(i))$ to studying \eqref{maininput}. We accomplish this in Section \ref{cantor_line}
for even integer $p$ and specific Cantor sets $C$ related to ellipsephic sets.

\subsubsection{Discrete restriction and decoupling} First we define a discrete restriction for subsets $S \subset \Z^m$ and decoupling constants for $\Om \subset [0, 1]$.
For $S \subset \Z^m$, let $A_{p, m}(S)$ be the best constant such that
\begin{align*}
\nms{\sum_{\ell \in S}a(\ell)e(\ell \cdot x)}_{L^{p}([0, 1]^m)} \leq A_{p, m}(S)(\sum_{\ell \in S}|a(\ell)|^{2})^{1/2}
\end{align*}
for all $a: S \rightarrow \R_{\geq 0}$.
Next for a subset $\Om \subset [0, 1]$ partitioned into intervals $I$ of equal length, let $K_{p}(\Om)$ be the best constant such that
\begin{align*}
\nms{\sum_{I}f_{I}}_{L^{p}(\R)} \leq K_{p}(\Om)(\sum_{I}\nms{f_I}_{L^{p}(\R)}^{2})^{1/2}
\end{align*}
for all Schwartz functions $f: \R \rightarrow \R$.

Since we plan to discuss multiple different $S$ and $S$ will be related to $\Om$, we have chosen
to emphasize the dependence of $A_{p, m}(S)$ and $K_{p}(\Om)$ on $S$ and $\Om$ rather than just the scale
that comes naturally with $\Om$. This is different from what we did in the definition of $D_{p}(\delta(i))$ above with $C_i$
being associated naturally with the scale $\delta(i)$.

\subsubsection{Arithmetic Cantor sets and ellipsephic sets}
We define an \emph{arithmetic Cantor set} of base $q$ with digits $0 \leq d_1 < \ldots < d_k < q \in \N$
to be the set of fixed points of the iterated function system generated by the functions
$\left\{f_{d_j} = \left(x\mapsto q^{-1} (x+d_j)\right)\right\}_{j=1,\dots, k}$.
This is a self-similar compact subset of $[0,1]$ with Hausdorff dimension $
\frac{\log k}{\log q}$. We will denote it by $\ArCantor{q}$.

Denote by $\Level[j]{\ArCantor{q}}$ the $j-$th level of $\ArCantor{q}$, that is
$$\Level[j]{\ArCantor{q}}:=\bigcup_{(s_1, \dots, s_j) \in \digits^j} (f_{s_1}\circ \dots \circ f_{s_j})([0,1]).$$
For brevity of notation, the intervals of length $q^{-j}$ in $P_{q^{-j}}(\Level[j]{\ArCantor{q}})$ will be denoted by $\Level[j]{\PartCantor{q}}$.
In particular, observe that
$$\Level[j]{\PartCantor{q}}=\{(f_{s_1}\circ \dots \circ f_{s_j})([0,1]): (s_1, \dots, s_j) \in \digits^j\}.$$
The standard middle thirds Cantor set is the arithmetic Cantor set  $\ArCantor[\{0,2\}]{3}$. Note also that
$\ArCantor[\{0,1\}]{3}$ and $\ArCantor[\{0,2\}]{3}$ are dilated copies of each other.

There is also a close connection between arithmetic Cantor sets and ellipsephic sets defined in \cite{biggs-quadratic}.
An \emph{ellipsephic set} of base $q$ with digits $0\le d_1< \dots <d_k < q \in \mathbb N$ is the set of integers of the form $\sum_{s=0}^{j-1} a_s q^s$
(with $a_s \in \digits$) for some $j\ge 1$. We will denote it by $\Ellipsephic{q}$. We will use $\Level[j]{\Ellipsephic{q}}$ to mean the set $\Ellipsephic{q}\cap [0,q^{j})$.
Comparing the definitions of an arithmetic Cantor set and an ellipsephic set,
we easily observe that $$\Level[j]{\ArCantor{q}} =  q^{-j}\left(\Level[j]{\Ellipsephic{q}}+[0,1]\right).$$

Using the convenience that $2n$ is even and expanding the $L^{2n}$ norm (\Cref{prop:decoupling-ellipsephic-optimization}),
allows use to show \Cref{prop:ellipsephic_to_cantor}
\begin{align}\label{k2na2n}
K_{2n}(\Level[j]{\PartCantor q}) \sim A_{2n, 1}(\Level[j]{\Ellipsephic q})
\end{align}
(where the implied constant is absolute) which connects decoupling and discrete restriction constants.

When we study $A_{2n, 1}(\Level[j]{\Ellipsephic q})$, we will say $\Ellipsephic{q}$ has no carryover if $nd_k < q$. In particular,
this definition depends on the $n$ in question. Additionally note that we will say that $\Ellipsephic{q}$ has carryover if $nd_k \geq q$.
This terminology was inspired from the proof of \cite[Lemma 2.2]{biggs-quadratic}.
Using Freiman isomorphisms, we have the following nice proposition which simplifies greatly discrete restriction
for ellipsephic sets when we have no carryover (see \Cref{prop:no_carryover} for a more precise statement).
\begin{prop}\label{nocarrover_intro}
If $\Ellipsephic{q}$ is an ellipsephic set without carryover, then $$A_{2n, 1}( [\Ellipsephic{q}]_j ) = A_{2n, 1}( [\Ellipsephic{q}]_1 )^j.$$
\end{prop}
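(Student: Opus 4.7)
The plan is to exploit the no-carryover hypothesis $nd_k < q$ to reduce the $j$-digit problem to $j$ copies of the one-digit problem. Define the digit-encoding map $\Psi: \digits^j \to \Level[j]{\Ellipsephic{q}}$ by $\Psi(a_0, \dots, a_{j-1}) = \sum_{s=0}^{j-1} a_s q^s$. When $n$ elements $\ell_1, \dots, \ell_n \in \Level[j]{\Ellipsephic{q}}$ are summed, each positional digit contribution is at most $nd_k < q$, so no base-$q$ carries occur and the base-$q$ expansion of $\sum_i \ell_i$ reads off each positional sum uniquely. Hence $\sum_{i=1}^n \ell_i = \sum_{i=1}^n m_i$ in $\Z$ if and only if the corresponding identity holds in each coordinate of $\digits^j \subset \Z^j$, which is the statement that $\Psi$ is a Freiman $n$-isomorphism.

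Next, when $p = 2n$ is an even integer, expanding the $L^{2n}(\T^m)$ norm via orthogonality gives
\begin{align*}
\nms{\sum_{\ell \in S} a(\ell) e(\ell \cdot x)}_{L^{2n}(\T^m)}^{2n} = \sum_{\st{\vec{\ell}, \vec{m} \in S^n \\ \ell_1 + \cdots + \ell_n = m_1 + \cdots + m_n}} \prod_{i=1}^n a(\ell_i)\, \ov{a(m_i)},
\end{align*}
which depends on $S$ only through its $n$-fold additive structure. Since a Freiman $n$-isomorphism preserves exactly this structure, as well as the $\ell^2$-norm on coefficient sequences, I conclude $A_{2n, 1}(\Level[j]{\Ellipsephic{q}}) = A_{2n, j}(\digits^j)$.

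It then suffices to prove the tensorization identity $A_{2n, j}(T^j) = A_{2n, 1}(T)^j$ for any finite $T \subset \Z$. The lower bound follows by testing on rank-one arrays $a(\vec{\ell}) = \prod_{s} a_s(\ell_s)$, which makes the exponential sum factor into a product of $j$ one-dimensional sums over the torus variables $x_1, \dots, x_j$. For the upper bound I would induct on $j$: freeze $x_1, \dots, x_{j-1}$ and apply the one-dimensional bound $A_{2n, 1}(T)$ in $x_j$; this leaves an integral over $\T^{j-1}$ of the $n$-th power of an $\ell^2_{\ell_j}$-sum of $L^{2n}(\T^{j-1})$-functions, to which Minkowski's inequality in $L^n(\T^{j-1})$ applies and pulls the $\ell^2$ sum outside; the inductive hypothesis on the $(j-1)$-fold problem then finishes the estimate.

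Combining the Freiman isomorphism step with the tensorization identity and the identification $\digits = \Level[1]{\Ellipsephic{q}}$ yields the claim. The main obstacle is the tensorization step: one must track carefully how the $L^{2n}$ norm on the function side interacts with the $\ell^2$ norm on the coefficient side across the $j$ coordinate directions, and the Minkowski interchange succeeds precisely because $n \geq 1$ and because the exponents $2n$ and $n$ balance correctly after raising the intermediate $\ell^2$-sum to the $n$-th power.
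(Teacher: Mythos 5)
Your proof is correct and follows essentially the same route as the paper: you establish that the digit map $\operatorname{Digits}_q$ (your $\Psi^{-1}$) is a Freiman isomorphism of order $n$ via the no-carryover condition $nd_k < q$, observe via the even-exponent expansion (the paper's \Cref{prop:decoupling-ellipsephic-optimization}) that $A_{2n}$ is invariant under such isomorphisms (the equality case of \eqref{eq:freiman-hom-decoupling}), and then invoke the tensorization identity $A_{2n,j}(T^j)=A_{2n,1}(T)^j$, which you prove by the same freeze-and-Minkowski induction as the paper's \Cref{prop:tensor-decoupling}. The paper additionally records the product-form extremizer in \Cref{prop:no_carryover}, but that goes beyond the stated equality.
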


\begin{rem}
{\L}aba and Wang in \cite{labawang} consider a restriction estimate for a certain kind of fractal measure in $\R^d$.
The main ingredient in the proof of their main theorem is a decoupling estimate for a particular type of Cantor set on the line built
out of a $\Lambda(p)$-set (see Lemma 5, Section 4, and Proposition 1 of \cite{labawang} for more details, see also \cite{bourgain-lambda} for the existence of $\Lambda(p)$ sets).
The techniques by which they upgrade a $\Lambda(p)$ set to a Cantor set multiscale decoupling theorem on the line can probably also be applied in our case,
though here the point of view we take is more algebraic and is closer in spirit to the number theoretic side of things.
\end{rem}

\subsection{Examples} As an illustration of the the tools developed above we can consider the case when $n = 2$ and then very explicitly study $A_{4, 1}(\Level[j]{\Ellipsephic q})$
as \Cref{prop:decoupling-ellipsephic-optimization} turns such study into an optimization problem subject to a quadratic constraint which we can very explicitly
compute. This combined with \eqref{k2na2n} allows us to upgrade $l^2 L^{4}$ discrete restriction for an ellipsephic set to $l^{2}L^{4}$ decoupling
for an arithmetic Cantor set.
In particular, below is a summary of Examples \ref{01q}-\ref{squares} we derived in Section \ref{sub:Examples}.
\vspace{0.15in}
\begin{center}
\renewcommand{\arraystretch}{1.5}
\begin{tabular}{|c|c|c|c|}
\hline
  $C_{i}$ & $\delta(i)$ & $N(i)$ & $K_{4}(C_{i})$\\
  \hline
  $[C_{q}^{\{0, 1\}}]_{i}, q > 2$ & $q^{-i}$ & $2^{i}$ & $\sim (2^{i})^{\frac{1}{4}\log_{2}(3/2)}$\\
  $[C_{3}^{\{0, 2\}}]_{i}$ & $3^{-i}$ & $2^{i}$ & $\sim (2^{i})^{\frac{1}{4}\log_{2}(3/2)}$\\
  $[C_{q}^{\{0, 1, 2\}}]_{i}, q > 4$ & $q^{-i}$ & $3^{i}$ & $\sim (3^{i})^{\frac{1}{4}\log_{3}(15/7)}$\\
  $[C_{q}^{\{0, 1, 3\}}]_{i}, q > 6$ & $q^{-i}$ & $3^{i}$ & $\sim (3^{i})^{\frac{1}{4}\log_{3}(5/3)}$\\
  $[C_{q}^{\{0^{2}, 1^{2}, \ldots, \lfloor \sqrt{q}\rfloor^{2}\}}]_{i}, q \geq \exp(\exp(O(\frac{1}{\vep})))$ & $q^{-i}$ & $(\lfloor \sqrt{q}\rfloor + 1)^{i}$ & $\lsm_{\vep} N(i)^{\vep}$\\
  \hline
\end{tabular}
\end{center}
\vspace{0.15in}

Note that from the proof of these examples in Section \ref{sub:Examples}, the implied constants do not depend on $\dim(C)$ or $N(1)$.
We only studied the $n = 2$ case out for convenience to demonstrate our methods but it is not a serious constraint.
\begin{rem}\label{squaresremark}
The ellipsephic set associated to the Cantor set in the last row of the table above was considered by Biggs in \cite[Corollary 1.4]{biggs-quadratic}.
The result in that row should be read as follows: Fix an arbitrary $\vep > 0$. Choose an integer $q \geq \exp(\exp(O(1/\vep)))$ and consider
$[C_{q}^{\{0^{2}, 1^{2}, \ldots, \lfloor \sqrt{q}\rfloor^{2}\}}]_{i}$. Note that here the Cantor set depends on $q$ and so also $\vep$.
Then we showed that the $l^{2}L^{4}$ decoupling constant for level $i$ of this Cantor set is $\lsm_{\vep} N(i)^{\vep}$ where $N(i) = (\lfloor q\rfloor + 1)^{i}$.
\end{rem}
\begin{rem}
The example in the second row of the table above is associated to the ellipsephic set $[\E_{3}^{\{0, 2\}}]_{j}$ which does have carryover.
However, the map $x \mapsto x/2$ is a Freiman isomorphism between $[\E_{3}^{\{0, 2\}}]_{j}$ and $[\E_{3}^{\{0, 1\}}]_{j}$
and the latter ellipsephic set does not have carryover. Since Freiman isomorphisms do not change numerology (see
the equality case of \eqref{eq:freiman-hom-decoupling}), the
numerology of the second row is the same as that of the first row.
\end{rem}

\begin{rem}
Note that $C_{q}^{\{0, 1, 2\}}$ and $C_{q}^{\{0, 1, 3\}}$ for $q > 6$ have the same Hausdorff
dimension but their associated $l^{2}L^{4}$ decoupling constants are different. In \Cref{maxexp} we show that
given a Hausdorff dimension $d = \log_{s}r$ with $0 < d < 1$ and $r, s \in \N$, there exists an arithmetic Cantor set $C$
such that the associated decoupling exponent $\kappa_{2n}(C)$ as defined in \eqref{maininput} is as large as possible. This means
that for arbitrary arithmetic Cantor sets $K_{2n}(C)$ does not just depend on the Cantor set, but rather also on arithmetic
properties of the set.
\end{rem}

\begin{rem}
A careful look at the proof of Example \ref{012q} (the third row in the table above) shows curiously that the optimizer of discrete restriction
for $[\E_{q}^{\{0, 1, 2\}}]_{1}$, $q > 4$ (and hence also $[\mathcal{E}_{q}^{\{0, 1, 2\}}]_{j}$ by \Cref{prop:no_carryover} because of lack of carryover).
This is different from the other examples in Section \ref{sub:Examples} and the observation that
the choice of $a: \{1, \ldots, \N\} \rightarrow \R_{\geq 0}$ being the constant function below witnesses the case of equality of the estimates
$$\nms{\sum_{1 \leq \ell \leq N}a(\ell)e(\ell x)}_{L^{2n}([0, 1])} \leq N^{\frac{1}{2} - \frac{1}{n}}(\sum_{1 \leq \ell \leq N}|a(\ell)|^{2})^{1/2}$$
and
$$\nms{\sum_{1 \leq n \leq N}a(n)e(nx + n^{2}t)}_{L^{6}([0, 1]^2)} \lsm_{\vep} N^{\vep}(\sum_{1 \leq n \leq N}|a(n)|^{2})^{1/2}$$
for all $\{a(n)\} \in \ell^{2}(\N)$.
This example suggests potential differences between discrete restriction and solution counting problems in certain cases.
\end{rem}

In the table below we feed our results into Theorem \ref{main}. Each row should be compared to the estimate
that $D_{12}(\delta(i)) \lsm_{\vep} \delta(i)^{-1/4 - \vep}$ obtained from a direct application of Bourgain-Demeter's decoupling
theorem for the parabola.
\vspace{0.15in}
\begin{center}
\renewcommand{\arraystretch}{1.5}
\begin{tabular}{|c|c|c|c|}
\hline
  $C_{i}$ & $\delta(i)$ & $N(i)$ & Applying Theorem \ref{main}\\
  \hline
  $[C_{q}^{\{0, 1\}}]_{i}, q > 2$ & $q^{-i}$ & $2^{i}$ & $D_{12}(\delta(i)) \lsm_{\vep, \dim(C)} (2^{i})^{\frac{1}{4}\log_{2}(3/2) + \vep}$\\
  $[C_{3}^{\{0, 2\}}]_{i}$ & $3^{-i}$ & $2^{i}$ & $D_{12}(\delta(i)) \lsm_{\vep} (2^{i})^{\frac{1}{4}\log_{2}(3/2) + \vep}$\\
  $[C_{q}^{\{0, 1, 2\}}]_{i}, q > 4$ & $q^{-i}$ & $3^{i}$ & $D_{12}(\delta(i)) \lsm_{\vep, \dim(C)} (3^{i})^{\frac{1}{4}\log_{3}(15/7) + \vep}$\\
  $[C_{q}^{\{0, 1, 3\}}]_{i}, q > 6$ & $q^{-i}$ & $3^{i}$ & $D_{12}(\delta(i)) \lsm_{\vep, \dim(C)} (3^{i})^{\frac{1}{4}\log_{3}(5/3) + \vep}$\\
  $[C_{q}^{\{0^{2}, 1^{2}, \ldots, \lfloor \sqrt{q}\rfloor^{2}\}}]_{i}, q \geq \exp(\exp(O(\frac{1}{\vep})))$ & $q^{-i}$ & $(\lfloor \sqrt{q}\rfloor + 1)^{i}$ & $D_{12}(\delta(i)) \lsm_{\vep, N(1)} N(i)^{\vep}$\\
  \hline
\end{tabular}
\end{center}
\vspace{0.15in}

Note that in the first four rows we have $N(1) \sim 1$ while in the second and last row we have $\dim(C) \sim 1$.
Whether our estimates for $D_{12}(\delta(i))$ above are sharp remain an area to be explored (in other words, for example, is there an $f$ Fourier supported
in $\bigcup_{J \in [\mathscr{C}_{3}^{\{0, 2\}}]_{i}}\Om_{J}$ such that $D_{12}(\delta(i)) \gtrsim (2^{i})^{\frac{1}{4}\log_{2}(3/2)}$).
Continuing the discussion in Remark \ref{squaresremark}, the last row in the table above should be compared to  \cite[Corollary 1.4]{biggs-quadratic}.

Finally the above methods are very efficient in studying the case when the ellipsephic set does not have carryover and some cases with
carryover but which are Freiman isomorphic to a case which has no carryover.
To study the case when the ellipsehic set has carryover we develop an approximation (\Cref{prop: DecExp estimate}) which allows
us to numerically approximate the $l^{2}L^{2n}$ decoupling constant on $[0, 1]$ for a given arithmetic Cantor set (see Section \ref{computational}
for more details).

\subsection{Application to solution counting}
We end with some applications of our estimates to number theory, in particular to solution counting in Vinogradov systems.

\subsubsection{The Cantor set $C_{3}^{\{0, 1\}}$}\label{c301}
Consider $[C_{3}^{\{0, 1\}}]_{j}$ and the associated ellipsephic set $[\E_{3}^{\{0, 1\}}]_{j}$.
Note $\#[\E_{3}^{\{0, 1\}}]_{j} \sim 2^i$.
We first obtained that $K_{4}([C_{3}^{\{0, 1\}}]_{j}) \sim A_{4}([\E_{3}^{\{0, 1\}}]_{j}) \sim (3/2)^{j/4}$.
This immediately implies that the number of 4-tuples to $$x_1 + x_2 = x_3 + x_4$$
with $1 \leq x_i \leq 3^{j}$ and $x_i \in [\E_{3}^{\{0, 1\}}]_{j}$ is $(3/2)^{j}2^{2j} = 6^{j}$.
This should be compared to solving $x_1 + x_2 = x_3 + x_4$ where $1 \leq x_i \leq 2^{j}$ which would give $8^{j}$ such 4-tuples.
The 6 in $6^{j}$ can be explained by the fact that since $\E_{3}^{\{0, 1\}}$ in this
case has no carryover ($2 \cdot 1 < 3$), we can look one digit at a time and
there are 6 solutions to $a + b = c + d $ where $a, b, c, d \in \{0, 1\}$.

Next we obtained that $D_{12}(\delta(j)) \lsm_{\vep} (3/2)^{j/4 + \vep}$ where $\delta(j) = 3^{-j}$.
Using the standard reduction from decoupling estimates to solving Vinogradov \cite{BDG} we see that the
number of solutions to the system
\begin{equation}\label{j62}
\begin{aligned}
x_1 + x_2 + \cdots + x_6 &= y_1 + y_2 + \cdots + y_6\\
x_{1}^{2} + x_{2}^{2} + \cdots + x_{6}^{2} &= y_{1}^{2} + y_{2}^{2} + \cdots + y_{6}^{2}
\end{aligned}
\end{equation}
where $1 \leq x_i, y_i \leq 3^{j}$ and $x_i, y_i \in [\E_{3}^{\{0, 1\}}]_{j}$ is $\lsm_{\vep} (\frac{3}{2})^{3j + \vep}2^{6j} = 6^{3j + O(\vep)}$.
This should be compared to the lower bound of $O(2^{6j})$ coming from the diagonal solutions.

\subsubsection{The Cantor set $C_{q}^{\{0^{2}, 1^{2}, \ldots, \lfloor \sqrt{q}\rfloor^{2}\}}$}\label{csquare}
Fix arbitrary $\vep > 0$. Choose $q$ an integer (not necessarily prime) such that $q \geq \exp(\exp(O(1/\vep)))$
and consider the ellipsephic set $[\E_{q}^{\{0^{2}, 1^{2}, \ldots, \lfloor \sqrt{q}\rfloor^{2}\}}]_{j}$ associated to the Cantor set
$[C_{q}^{\{0^{2}, 1^{2}, \ldots, \lfloor \sqrt{q}\rfloor^{2}\}}]_{j}$. Then the estimate that
$D_{12}(\delta(j)) \lsm_{\vep, N(1)}N(j)^{\vep} $ implies that the number of solutions to the system \eqref{j62}
where $1 \leq x_i, y_i \leq q^{j}$ and $x_{i}, y_{i} \in [\E_{q}^{\{0^{2}, 1^{2}, \ldots, \lfloor \sqrt{q}\rfloor^{2}\}}]_{j}$ is
$\lsm_{\vep, N(1)} N(j)^{6 + \vep} $. This rederives the implication obtained in \cite[Corollary 1.4]{biggs-quadratic} (where our $N(j)$ is her $Y$).

\begin{rem}
In the system considered in Section \ref{c301}, our upper bound is quite large compared to the lower bound of $2^{6N}$ which come from
the diagonal contribution. In the following, we argue that given an ellipsephic set (whose associated Cantor set has
dimension $d$), then when the number of variables is sufficiently large depending on $d$, then the contribution
of the non-diagonal solutions will be greater than that of the diagonal solutions.

More precisely, fix an arbitrary arithmetic Cantor set $C_{q}^{\{d_1, \ldots, d_k\}}$ with Hausdorff dimension $d \in (0, 1)$
and consider the associated ellipsephic set $\E_{X} := [\E_{q}^{\{d_1, \ldots, d_k\}}]_{j}$ where we have
written $X = q^{j}$.
Then $\# \E_{X} \sim X^{d}$.
We consider the question of how many solutions are there to the system
\begin{equation}\label{vino}
\begin{aligned}
x_{1} + x_2 + \cdots + x_{s} &= x_{s + 1} + x_{s + 2} + \cdots + x_{2s}\\
x_{1}^{2} + x_{2}^{2} + \cdots + x_{s}^{2} &= x_{s + 1}^{2} + x_{s + 2}^{2} + \cdots + x_{2s}^{2}
\end{aligned}
\end{equation}
where $x_{i} \in \E_{X}$. The contribution from the diagonal solutions is $O(X^{sd})$.
We claim that for sufficiently large $s$ there will always be more than $O(X^{sd})$ many solutions.

Consider the map
\begin{align*}
\Sigma : (\E_{X})^{s} &\longrightarrow [-sX, sX] \times [-sX^{2}, sX^{2}]\\
(a_1, a_2, \ldots, a_s) &\longmapsto (a_1 + \cdots + a_s, a_{1}^{2} + \cdots + a_{s}^{2})
\end{align*}
The map $\Sigma$ goes from a set of cardinality $O(X^{sd})$ to a set of cardinality $O(s^{2}X^{3})$. For notational convenience let $A_X = [-sX, sX] \times [-sX^{2}, sX^{2}]$. The number of solutions $J_s(X)$ to  \eqref{vino} is bounded below by:

\begin{align*}
    J_s(X) =&
    \sum_{(n_1,n_2)\in  A_X} (\sum_{\substack{ a_1^j+\dots+a_s^j = n_j\\ a_i \in (\E_{X})^{s}, j=1,2}} 1)^2
    \\\ge&
    |A_X|^{-1}
(\sum_{(n_1,n_2)\in  A_X} \sum_{\substack{ a_1^j+\dots+a_s^j = n_j\\ a_i \in (\E_{X})^{s}, j=1,2}} 1)^2
\\ = & (O(s^2X^{3}))^{-1} \cdot (O(X^{sd}))^2 = O(X^{2sd-3}/s^2)
\end{align*}

Therefore the number of solutions to \eqref{vino} is at least $O(X^{2sd-3}/s^2)$. Comparing this to the number of diagonal solutions $O(X^{sd})$ shows that
for $s$ sufficiently large (depending on Hausdorff dimension), the contribution
of the off-diagonal solutions are more than the diagonal solutions.
\end{rem}

\subsection*{Acknowledgements}
JD was partially supported by “La Caixa” Fellowship LCF/ BQ/ AA17/ 11610013.
RG was partially supported by the Eric and Wendy Schmidt Postdoctoral Award.
AJ was supported by DFG-research fellowship JA 2512/3-1.
ZL is supported by NSF grant DMS-1902763. ZL is also grateful to the Department
of Mathematics at the University of Chicago and the University of California, Los Angeles
for their hospitality when he visited in February 2020.

The authors would also like to thank Iqra Altaf, Kirsti Biggs, Julia Brandes, Ciprian Demeter, Bingyang Hu, and Terence Tao for helpful comments, discussions, and suggestions.

\section{Proof of Theorem \ref{main}}\label{sec_main}
Fix a Cantor set $C$ (and its levels).
Much like the proof of decoupling for the parabola in \cite{li-efficient}, the proof of \Cref{main} reduces to four lemmas: parabolic rescaling, bilinear reduction, the key estimate,
and H\"{o}lder's inequality.

\subsection{Parabolic rescaling and bilinear reduction}
We first start with the parabolic rescaling lemma. The proof is fairly standard, but we include it here for convenience.
\begin{lemma}[Parabolic rescaling]\label{parab}
Suppose $0 \leq \delta(j) \leq \delta(i) \leq 1$ and $I \in P_{\delta(i)}(C_i)$. Then
\begin{align}\label{parabeq1}
\nms{\sum_{J \in P_{\delta(j)}(I \cap C_j)}f_{\Om_{J}}}_{L^{p}(\R^2)} \leq D_{p}(\delta(j - i))
\left(\sum_{J \in P_{\delta(j)}(I \cap C_j)}\nms{f_{\Om_J}}_{L^{p}(\R^2)}^{2}\right)^{1/2}.
\end{align}
\end{lemma}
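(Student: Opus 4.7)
The plan is to prove this by a standard parabolic rescaling argument, with the extra ingredient that the self-similarity of $C$ guarantees that the subintervals of $I$ appearing on the left-hand side of \eqref{parabeq1} match, after rescaling, with the intervals of $P_{\delta(j-i)}(C_{j-i})$ that appear in the definition of $D_p(\delta(j-i))$.

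First, I would set up the affine change of variables on the frequency side that rescales $I$ to the unit interval. For $I = [\ell_I, \ell_I + \delta(i)] \in P_{\delta(i)}(C_i)$, let $L : \R^2 \to \R^2$ be the affine map
\begin{align*}
L(\xi_1, \xi_2) = \left(\frac{\xi_1 - \ell_I}{\delta(i)}, \frac{\xi_2 - 2\ell_I \xi_1 + \ell_I^2}{\delta(i)^2}\right),
\end{align*}
which sends $(\xi_1, \xi_1^2)$ to $(\eta_1, \eta_1^2)$ with $\eta_1 = (\xi_1 - \ell_I)/\delta(i)$. A direct check shows that $L$ maps $\Om_J$ for $J \subset I$ onto $\Om_{J'}$, where $J'$ is the image of $J$ under $t \mapsto (t - \ell_I)/\delta(i)$; in particular $|J'| = |J|/\delta(i)$, so if $|J| = \delta(j)$ then $|J'| = \delta(j-i)$.

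Second, I would invoke the self-similarity. By iterating the third bullet of the definition of a generalized Cantor set, $I \cap C_j$ is a translate of $\delta(i) \cdot C_{j-i}$ sitting inside $I$, so the rescaling $t \mapsto (t - \ell_I)/\delta(i)$ sends the collection $P_{\delta(j)}(I \cap C_j)$ bijectively onto $P_{\delta(j-i)}(C_{j-i})$. Combined with the previous step, the map $J \mapsto J'$ is a bijection between the two index sets, and the frequency boxes $\Om_J$ are sent to exactly the boxes $\Om_{J'}$ appearing on the right-hand side of the definition of $D_p(\delta(j-i))$.

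Third, I would transfer everything to the physical side. Define $g$ by $\wh{g} = \wh{f} \circ L^{-1}$; then $\wh{g_{\Om_{J'}}} = \wh{f_{\Om_J}} \circ L^{-1}$, and each term on either side of \eqref{parabeq1} is a constant multiple (depending only on $\det L$, which is a power of $\delta(i)$) of the corresponding term for $g$. Applying the definition of $D_p(\delta(j-i))$ to $g$ with the collection $P_{\delta(j-i)}(C_{j-i})$ and undoing the change of variables yields \eqref{parabeq1}, with the Jacobian factors canceling because the exponent $2$ on the right is matched by the square root being raised to the first power against the $L^p$ on the left.

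The only nonroutine point is the self-similarity step: one needs to verify carefully that iteration of the single-scale self-similarity condition in the definition of generalized Cantor set produces $I \cap C_j \cong \delta(i) \cdot C_{j-i}$ as a translate. Once that correspondence of index sets is in hand, the rest is the standard parabolic rescaling bookkeeping and presents no real obstacle.
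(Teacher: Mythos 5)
Your proposal is correct and follows essentially the same route as the paper's proof: the same Galilean/parabolic affine change of variables on the frequency side (your $L$ is the paper's $\eta = S_I(\xi - (a,a^2))$), the same use of self-similarity to identify $P_{\delta(j)}(I\cap C_j)$ with $P_{\delta(j-i)}(C_{j-i})$ via $J\mapsto \delta(i)^{-1}(J-a)$, and the same observation that the Jacobian factors appear identically on both sides and cancel. The only difference is cosmetic: you point out explicitly that the self-similarity needs to be iterated to pass from one level to $j-i$ levels, a step the paper treats as immediate.
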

\begin{proof}
Write $I = [a, a + \delta(i)]$. Consider the ``Galilean transform'' $S_I : \R^2 \to \R^2$  represented by the matrix
\begin{align*}
\begin{pmatrix}
\delta(i)^{-1} & 0\\ 0 & \delta(i)^{-2}
\end{pmatrix}
\begin{pmatrix}
1 & 0\\
-2a & 1
\end{pmatrix}
.
\end{align*}
The key geometric observation is that since $C_{i}$ is a level of a Cantor set (and Cantor set levels are similar), we have a bijection $P_{\delta(j)}(I \cap C_j) \to P_{\delta(j - i)}(C_{j - i})$ given by $J \mapsto J' = \delta(i)^{-1}(J-a)$, and furthermore,
\begin{align}
\label{eq:S_I-Om-J}
S_{I}(\Om_{J} - (a, a^2)) = \Om_{J'}.
\end{align}
Define $g_{I}(y) := f(S_{I}^{\top}y)e(-S_{I}(a, a^2) \cdot y)$, so that $\wh{g_{I}}(\eta) = \delta(i)^{3} \wh{f}(S_{I}^{-1}\eta + (a, a^2))$. With $J, J'$ as above, we have
\begin{align*}
f_{\Om_{J}}(x)
&=
\int_{\Om_{J}}\wh{f}(\xi)e(\xi \cdot x)\, d\xi\\
&=
e(x \cdot (a, a^2))\int_{\Om_{J'}}\wh{g_{I}}(\eta)e(\eta\cdot (S_{I}^{-1})^\top x)\, d\eta
=
e(x \cdot (a, a^2))(g_{I})_{\Om_{J'}}((S_{I}^{-1})^{\top}x)
\end{align*}
where in the second equality we made the change of variables $\eta = S_{I}(\xi - (a, a^2))$ and used \eqref{eq:S_I-Om-J}. Therefore,
\begin{align*}
|\sum_{J \in P_{\delta(j)}(I \cap C_j)}f_{\Om_{J}}(x)| = |\sum_{J' \in P_{\delta(j - i)}(C_{j - i})}(g_{I})_{\Om_{J'}}((S_{I}^{-1})^{\top}x)|
\end{align*}
and hence
\begin{align*}
\nms{\sum_{J \in P_{\delta(j)}(I \cap C_j)}f_{\Om_{J}}}_{L^{p}(\R^2)} &= \delta(i)^{-3/p}\nms{\sum_{J' \in P_{\delta(j - i)}(C_{j - i})}(g_{I})_{\Om_{J'}}}_{L^{p}(\R^2)}\\
&\leq \delta(i)^{-3/p}D_{p}(\delta(j - i))(\sum_{J' \in P_{\delta(j - i)}(C_{j - i})}\nms{(g_{I})_{\Om_{J'}}}_{L^{p}(\R^2)}^{2})^{1/2}.
\end{align*}
Reversing all the change of variables then obtains the right hand side of \eqref{parabeq1}.
\end{proof}

Parabolic rescaling implies the following immediate corollary.
\begin{cor}[Almost multiplicativity]\label{almostmult}
We have $$D_{p}(\delta(i + j)) \leq D_{p}(\delta(i))D_{p}(\delta(j)).$$
\end{cor}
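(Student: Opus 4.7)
The plan is a standard two-scale iteration. Starting with an $f$ whose Fourier support lies in $\bigcup_{K \in P_{\delta(i+j)}(C_{i+j})} \Om_K$, I would first decouple at scale $\delta(i)$ (using the definition of $D_{p}(\delta(i))$) to separate the estimate into pieces indexed by $I \in P_{\delta(i)}(C_i)$, and then apply Lemma~\ref{parab} inside each such $I$ to decouple at the finer scale $\delta(i+j)$ into the individual $\Om_K$ pieces.

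The only geometric input needed is the containment $\Om_K \subseteq \Om_I$ whenever $K \in P_{\delta(i+j)}(I \cap C_{i+j})$. The $\xi_1$-range matches since $K \subseteq I$. For the $\xi_2$-condition, a short calculation shows that the parabola over $I$ lies within $\delta(i)^{2}/4$ of the affine function $(2\ell_I + \delta(i))(\xi_1 - \ell_I) + \ell_I^{2}$, while any $\xi \in \Om_K$ is within $\delta(i+j)^{2}$ of the parabola itself; since $\delta(i+j) \leq \delta(i)/q \leq \delta(i)/3$, the two errors sum to at most $\delta(i)^{2}$. Consequently, for $f$ as above and each $I \in P_{\delta(i)}(C_i)$, we have $f_{\Om_I} = \sum_{K \in P_{\delta(i+j)}(I \cap C_{i+j})} f_{\Om_K}$.

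With this in hand, applying the definition of $D_{p}(\delta(i))$ to $f$ gives
$$\nms{\sum_{K \in P_{\delta(i+j)}(C_{i+j})} f_{\Om_K}}_{L^{p}(\R^2)} \leq D_{p}(\delta(i)) \left( \sum_{I \in P_{\delta(i)}(C_i)} \nms{\sum_{K \in P_{\delta(i+j)}(I\cap C_{i+j})} f_{\Om_K}}_{L^{p}(\R^2)}^{2} \right)^{1/2},$$
and then Lemma~\ref{parab} bounds each inner norm by $D_{p}(\delta(j)) \bigl( \sum_{K \in P_{\delta(i+j)}(I \cap C_{i+j})} \nms{f_{\Om_K}}_{L^{p}(\R^2)}^{2} \bigr)^{1/2}$. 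Squaring, summing over $I$, and taking a square root yields $D_{p}(\delta(i+j)) \leq D_{p}(\delta(i)) D_{p}(\delta(j))$. The only real obstacle is the geometric containment step; everything else is mechanical combination of the definition of $D_p(\delta(i))$ with Lemma~\ref{parab}.
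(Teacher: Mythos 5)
Your proof is correct and is essentially the paper's intended argument: decouple at the coarse scale $\delta(i)$ via the definition of $D_p(\delta(i))$, then decouple within each $\delta(i)$-interval down to scale $\delta(i+j)$ via Lemma~\ref{parab}, which produces the factor $D_p(\delta((i+j)-i)) = D_p(\delta(j))$. You are right that the only thing to check is the nesting $\Om_K \subseteq \Om_I$ (so that $f$ qualifies for the first decoupling step and $f_{\Om_I} = \sum_{K \subset I} f_{\Om_K}$), and your estimate of the errors is fine up to a harmless slip in a constant — a point of $\Om_K$ sits within $\tfrac{5}{4}\delta(i+j)^2$ of the parabola rather than $\delta(i+j)^2$, but since $\delta(i+j) \leq \delta(i)/3$ the total error $\tfrac{5}{4}\delta(i+j)^2 + \tfrac{1}{4}\delta(i)^2 \leq \tfrac{14}{36}\delta(i)^2$ is still comfortably below $\delta(i)^2$.
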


Next we define the following bilinear constant.
Let $0 \leq \delta(j) \leq \delta(i_1), \delta(i_2) \leq \delta(k) \leq 1$. Let
$M_{p}(j, k, i_1, i_2)$ to be the best constant such that one has the estimate
\begin{align*}
\int_{\R^2}
&|\sum_{J_1 \in P_{\delta(j)}(I_1 \cap C_j)}f_{\Om_{J_1}}|^{p}|\sum_{J_2 \in P_{\delta(j)}(I_2 \cap C_j)}g_{\Om_{J_2}}|^{2p} \\
&\leq M_{p}(j, k, i_1, i_2)^{3p}(\sum_{J_1 \in P_{\delta(j)}(I_1 \cap C_j)}\nms{f_{\Om_{J_1}}}_{L^{3p}(\R^2)}^{2})^{p/2}(\sum_{J_2 \in P_{\delta(j)}(I_2 \cap C_j)}\nms{g_{\Om_{J_2}}}_{L^{3p}(\R^2)}^{2})^{p}
\end{align*}
for all $I_1 \in P_{\delta(i_1)}(C_{i_1})$ and $I_2 \in P_{\delta(i_2)}(C_{i_2})$ such that $d(I_1, I_2) \geq \delta(k)$
and all Schwartz functions $f$ with Fourier support on $\bigcup_{J_{1} \in P_{\delta(j)}(I_1 \cap C_j)}\Om_{J_1}$
and Schwartz functions $g$ with Fourier support on $\bigcup_{J_2 \in P_{\delta(j)}(I_2 \cap C_j)}\Om_{J_2}$.
Note that from H\"{o}lder,
\begin{align}\label{trivialmult}
M_{p}(j, k, i_1, i_2) \leq D_{3p}(\delta(j - i_1))^{1/3}D_{3p}(\delta(j - i_2))^{2/3}.
\end{align}

\begin{lemma}[Bilinear reduction]\label{bilinear}
If $0 \leq \delta(j) \leq \delta(i) \leq 1$, then
\begin{align}\label{red}
D_{3p}(\delta(j)) \lsm D_{3p}(\delta(j - i)) + N(i)^{O(1)}M_{p}(j, i, i, i).
\end{align}
\end{lemma}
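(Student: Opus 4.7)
The plan is to perform the standard Bourgain--Guth broad--narrow decomposition at scale $\delta(i)$. Given a Schwartz $f$ Fourier supported in $\bigcup_{J \in P_{\delta(j)}(C_j)} \Om_J$, set $f_I^* := \sum_{J \in P_{\delta(j)}(I \cap C_j)} f_{\Om_J}$ for each $I \in P_{\delta(i)}(C_i)$, so that $f = \sum_I f_I^*$. First I would establish the pointwise inequality
\begin{align*}
\Bigl|\sum_I f_I^*(x)\Bigr|^{3p} \lsm \max_I |f_I^*(x)|^{3p} + N(i)^{O(1)} \max_{\substack{I_1, I_2 \in P_{\delta(i)}(C_i) \\ d(I_1, I_2) \geq \delta(i)}} |f_{I_1}^*(x)|^p |f_{I_2}^*(x)|^{2p}
\end{align*}
via the usual two-largest-terms argument: let $I^*(x)$ be the interval maximizing $|f_I^*(x)|$ and $I^{**}(x)$ the one maximizing it over those $I$ with $d(I, I^*(x)) \geq \delta(i)$. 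Only $O(1)$ intervals in $P_{\delta(i)}(C_i)$ lie within $\delta(i)$ of $I^*(x)$, so $|f(x)| \lsm |f_{I^*(x)}^*(x)| + N(i)\,|f_{I^{**}(x)}^*(x)|$; raising to the $3p$ and using $|f_{I^{**}(x)}^*(x)|^{3p} \leq |f_{I^*(x)}^*(x)|^p\,|f_{I^{**}(x)}^*(x)|^{2p}$ yields the displayed inequality.

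Next I would integrate over $\R^2$. For the narrow (max) term, bound $\max_I |f_I^*(x)|^{3p}$ by $\sum_I |f_I^*(x)|^{3p}$ and then apply Lemma \ref{parab} to each $f_I^*$:
\begin{align*}
\nms{f_I^*}_{L^{3p}(\R^2)} \leq D_{3p}(\delta(j-i))\Bigl(\sum_{J \in P_{\delta(j)}(I \cap C_j)} \nms{f_{\Om_J}}_{L^{3p}(\R^2)}^2\Bigr)^{1/2}.
\end{align*}
Since $p \geq 2$ gives $3p/2 \geq 1$, the inequality $\sum_I (\sum_{J \subset I} a_J)^{3p/2} \leq (\sum_J a_J)^{3p/2}$ shows that the sum over $I$ of the $3p/2$-th powers of these square-sums is controlled by the $3p/2$-th power of $\sum_J \nms{f_{\Om_J}}_{L^{3p}(\R^2)}^2$, producing the $D_{3p}(\delta(j-i))$ contribution in \eqref{red}. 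For the broad term, bound the inner max by the sum over separated pairs and apply the definition of $M_p(j, i, i, i)$ (the conditions $I_1, I_2 \in P_{\delta(i)}(C_i)$ and $d(I_1, I_2) \geq \delta(i)$ are exactly what that constant is built for) to estimate each integral by
\begin{align*}
M_p(j, i, i, i)^{3p}\Bigl(\sum_{J_1 \subset I_1}\nms{f_{\Om_{J_1}}}_{L^{3p}(\R^2)}^2\Bigr)^{p/2}\Bigl(\sum_{J_2 \subset I_2}\nms{f_{\Om_{J_2}}}_{L^{3p}(\R^2)}^2\Bigr)^p.
\end{align*}
Upper bounding the sum over separated pairs by the sum over all $(I_1, I_2) \in P_{\delta(i)}(C_i)^2$, the sum factors; since $p/2, p \geq 1$, each factor collapses, giving $M_p(j,i,i,i)^{3p}(\sum_J \nms{f_{\Om_J}}_{L^{3p}(\R^2)}^2)^{3p/2}$. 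Combining with the $N(i)^{O(1)}$ prefactor and taking $3p$-th roots yields \eqref{red}.

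The main content is the Bourgain--Guth pointwise inequality; its essential geometric input is that at scale $\delta(i)$ only $O(1)$ intervals in $P_{\delta(i)}(C_i)$ can lie within $\delta(i)$ of any given one, which lets us cleanly peel off the narrow contribution. Everything else is H\"older-style exponent bookkeeping together with Lemma \ref{parab} and the definition of $M_p(j, i, i, i)$, so no further obstacle is expected.
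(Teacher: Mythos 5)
Your proof is correct, and the overall strategy is the same broad--narrow (Bourgain--Guth style) reduction at scale $\delta(i)$ that the paper uses, but the technical execution differs in a way worth noting. The paper squares the $L^{3p}$ norm, writes $\|\sum_J f_{\Om_J}\|_{L^{3p}}^2$ as the $L^{3p/2}$ norm of the double sum $\sum_{I_1,I_2} (\sum_{J_1 \subset I_1} f_{\Om_{J_1}})(\sum_{J_2 \subset I_2} f_{\Om_{J_2}})$, and then splits the sum over \emph{pairs} into near ($d(I_1,I_2) \leq \delta(i)$) and far. The near part is handled by the triangle inequality in $L^{3p/2}$, H\"older, Cauchy--Schwarz, and parabolic rescaling; the far part is bounded by $N(i)^{O(1)}$ times the maximal $L^{3p/2}$ norm of a separated product, and the asymmetric exponents $(p,2p)$ needed for $M_p$ are then manufactured from $\int F^{3p/2}G^{3p/2} \leq (\int F^p G^{2p})^{1/2}(\int F^{2p}G^p)^{1/2}$. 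You instead establish a pointwise inequality $|f(x)|^{3p} \lsm \max_I |f_I^*(x)|^{3p} + N(i)^{O(1)}\max_{d(I_1,I_2)\geq\delta(i)}|f_{I_1}^*(x)|^p|f_{I_2}^*(x)|^{2p}$ via the two-largest-terms argument and then integrate, which builds the $(p,2p)$ exponents directly into the broad term and bypasses the squaring step and the H\"older/Cauchy--Schwarz bookkeeping on the near pairs. Your version is arguably cleaner; the paper's version follows the $L^{3p/2}$-level formulation of \cite{li-efficient} and \cite{tao-247} more closely. Both approaches land on the same estimate, all your auxiliary steps (superadditivity of $x\mapsto x^s$ for $s \geq 1$, the $O(1)$-near count, parabolic rescaling applied to each $f_I^*$, factoring the product sum) are sound, and the $N(i)$-power bookkeeping works out to $N(i)^{O(1)}$ after the $3p$-th root.
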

\begin{proof}
Fix a Schwartz function $f$ with Fourier support in $\bigcup_{J \in P_{\delta(j)}(C_j)}\Om_{J}$.
We have
\begin{align}
\notag
\|\sum_{J \in P_{\delta(j)}(C_j)}f_{\Om_J}\|_{L^{3p}(\R^2)}^2
&
=
\nms{\sum_{\st{I_1, I_2 \in P_{\delta(i)}(C_i)}}
\left(\sum_{J_1 \in P_{\delta(j)}(I_1 \cap C_j)}f_{\Om_{J_1}}\sum_{J_2 \in P_{\delta(j)}(I_2 \cap C_j)}f_{\Om_{J_2}}\right)}_{L^{3p/2}(\R^2)}
\\
&
\label{redeq1}
\leq
\nms{
\sum_{\st{I_1, I_2 \in P_{\delta(i)}(C_i)\\d(I_1, I_2) \leq \delta(i)}} ( \cdots )
}_{L^{3p/2}(\R^2)}
+
\nms{
\sum_{\st{I_1, I_2 \in P_{\delta(i)}(C_i)\\d(I_1, I_2) \geq \delta(i)}} ( \cdots )
}_{L^{3p/2}(\R^2)}
\end{align}
By multiple applications of the Cauchy-Schwarz inequality, the first term of \eqref{redeq1} is
\begin{align*}
&\leq \sum_{\st{I_1, I_2 \in P_{\delta(i)}(C_i)\\d(I_1, I_2) \leq \delta(i)}}\nms{\sum_{J_1 \in P_{\delta(j)}(I_1 \cap C_j)}f_{\Om_{J_1}}}_{L^{3p}(\R^2)}\nms{\sum_{J_2 \in P_{\delta(j)}(I_2 \cap C_j)}f_{\Om_{J_2}}}_{L^{3p}(\R^2)}\\
&\leq \big(\sum_{I_1 \in P_{\delta(i)}(C_i)}\nms{\sum_{J_1 \in P_{\delta(j)}(I_1 \cap C_j)}f_{\Om_{J_1}}}_{L^{3p}(\R^2)}^{2}\big)^{1/2}\times \\
&\hspace{1.5in}\big(\sum_{I_1 \in P_{\delta(i)}(C_i)}\big(\sum_{\st{I_2 \in P_{\delta(i)}(C_i)\\d(I_1, I_2) \leq \delta(i)}}\nms{\sum_{J_2 \in P_{\delta(j)}(I_2 \cap C_j)}f_{\Om_{J_2}}}_{L^{3p}(\R^2)})^{2}\big)^{1/2}\\
&\lsm \sum_{I \in P_{\delta(i)}(C_i)}\nms{\sum_{J \in P_{\delta(j)}(I \cap C_j)}f_{\Om_{J}}}_{L^{3p}(\R^2)}^{2}\\
&\leq D_{3p}(\delta(j - i))^2\sum_{J \in P_{\delta(j)}(C_j)}\nms{f_{\Om_J}}_{L^{3p}(\R^2)}^{2}.
\end{align*}
In the third inequality above, we used the fact that for a fixed $I_1$, the number of $I_2$ satisfying $d(I_1, I_2) \leq \delta(i)$ is $\lesssim 1$. In the last inequality above, we  applied the definition of $D_{3p}(\delta(j - i))$.
This gives the first term on the right hand side of \eqref{red}.
The second term of \eqref{redeq1} is
\begin{align}\label{second}
\lsm N(i)^{O(1)}\max_{\st{I_1, I_2 \in P_{\delta(i)}(C_i)\\d(I_1, I_2) \geq \delta(i)}}\nms{(\sum_{J_1 \in P_{\delta(j)}(I_1 \cap C_j)}f_{\Om_{J_1}})(\sum_{J_2 \in P_{\delta(j)}(I_2 \cap C_j)}f_{\Om_{J_2}})}_{L^{3p/2}(\R^2)}.
\end{align}
For any two nonnegative functions $F,G$, we have $\int F^{3p/2}G^{3p/2} \leq (\int F^{p}G^{2p})^{1/2}(\int F^{2p}G^{p})^{1/2}$ by Cauchy-Schwarz. Using this observation and applying the definition of $M_{p}(j, i, i, i)^{3p}$
gives that \eqref{second} is
\begin{align*}
&\leq
N(i)^{O(1)}M_{p}(j, i, i, i)^2\times\\
&\hspace{0.5in}\max_{\st{I_1, I_2 \in P_{\delta(i)}(C_i)\\d(I_1, I_2) \geq \delta(i)}}(\sum_{J_1 \in P_{\delta(i)}(I_1 \cap C_j)}\nms{f_{\Om_{J_1}}}_{L^{3p}(\R^2)}^{2})^{1/2}(\sum_{J_2 \in P_{\delta(i)}(I_2 \cap C_j)}\nms{f_{\Om_{J_2}}}_{L^{3p}(\R^2)}^{2})^{1/2}\\
&\leq N(i)^{O(1)}M_{p}(j, i, i, i)^2(\sum_{J \in P_{\delta(j)}(C_j)}\nms{f_{\Om_J}}_{L^{3p}(\R^2)}^{2}).
\end{align*}
This gives the second term of the right hand side of \eqref{red} and thus completes the proof of the lemma.
\end{proof}

\subsection{Key Estimate}
The main idea of this section is that while the key estimate for the proof of decoupling for the parabola in \cite{li-efficient}
follows from Plancherel (see \cite[Lemma 3.8]{GLYZK} with $k = 2$, \cite[Remark 4]{li-efficient}, or \cite[Proposition 19]{tao-247}), the key estimate here will follow from \eqref{maininput}.
\begin{lemma}[Key estimate]\label{key}
If $0 \leq \delta(j) \leq \delta(i_1), \delta(i_{1}'), \delta(i_2) \leq \delta(k) \leq 1$
with $\delta(i_2)^{2} \leq \delta(i_{1}') \leq \delta(i_1)$, then for any $\vep > 0$,
\begin{align*}
M_{p}(j, k, i_1, i_2) \lsm_{p, \vep, \dim(C), N(1)} \delta(k)^{-O(1)}M_{p}(j, k, i_{1}', i_2)N(i_{1}' - i_{1})^{\kappa_{p}(C)/3 + \vep/3}
\end{align*}
where $\kappa_{p}(C)$ is defined in \eqref{maininput}.
\end{lemma}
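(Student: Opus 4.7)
The strategy is to reduce the bilinear $L^{3p}$ bound defining $M_p(j,k,i_1,i_2)$ to the analogous bilinear bound at the finer scale $i_1'$, paying the cost of a single application of the hypothesized line decoupling \eqref{maininput}. To set up, write $F = \sum_{J_1\in P_{\delta(j)}(I_1\cap C_j)} f_{\Omega_{J_1}}$ and $G = \sum_{J_2\in P_{\delta(j)}(I_2\cap C_j)} g_{\Omega_{J_2}}$, and decompose $F = \sum_{I_1'\in P_{\delta(i_1')}(I_1\cap C_{i_1'})} F_{I_1'}$ with $F_{I_1'} := \sum_{J_1\subset I_1'} f_{\Omega_{J_1}}$ Fourier-supported in $\Omega_{I_1'}$; the number of $I_1'$ is $N(i_1'-i_1)$. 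The plan is to establish the weighted $l^2 L^p$ decoupling
\begin{equation*}
\int |F|^p|G|^{2p} \lsm \delta(k)^{-O(1)} N(i_1'-i_1)^{p\kappa_p(C)+p\vep} \int \Bigl(\sum_{I_1'} |F_{I_1'}|^2\Bigr)^{p/2}|G|^{2p}.
\end{equation*}
Once this is in hand, Minkowski's integral inequality in $L^{p/2}(|G|^{2p}\,dx)$ (valid since $p\ge 2$) bounds the right-hand side by $\bigl(\sum_{I_1'}(\int |F_{I_1'}|^p|G|^{2p})^{2/p}\bigr)^{p/2}$; applying the definition of $M_p(j,k,i_1',i_2)$ to each inner integral and using the linearity $\sum_{I_1'} A_{I_1'} = A$ (where $A_{I_1'} = \sum_{J_1\subset I_1'}\nms{f_{\Omega_{J_1}}}_{3p}^2$ and $A = \sum_{J_1}\nms{f_{\Omega_{J_1}}}_{3p}^2$) collapses the expression to $M_p(j,k,i_1',i_2)^{3p}A^{p/2}B^p$, with $B = \sum_{J_2}\nms{g_{\Omega_{J_2}}}_{3p}^2$. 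Taking the $3p$th root then gives the lemma.

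The weighted decoupling is proved in the style of the parabola key estimate of \cite{li-efficient} and \cite{tao-247}, with Plancherel replaced by the Cantor-set line decoupling \eqref{maininput}. The Galilean affine rescaling $S_{I_1}$ from \Cref{parab} normalizes $I_1$ to $[0,1]$ and identifies the partition of $I_1$ by the $I_1'$ with a genuine level of the same Cantor set, namely $P_{\delta(i_1'-i_1)}(C_{i_1'-i_1})$. Cover $\R^2$ by essentially disjoint parallelograms $R$ of dimensions $\delta(i_2)^{-1}\times\delta(i_2)^{-2}$ dual to $\Omega_{I_2}$; by the uncertainty principle $|G|^{2p}$ is approximately constant on each $R$, up to rapidly decaying Schwartz tails. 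On each $R$ the weighted inequality reduces to the unweighted 1D estimate
\begin{equation*}
\int_R |F|^p \lsm N(i_1'-i_1)^{p\kappa_p(C)+p\vep} \int_R \Bigl(\sum_{I_1'} |F_{I_1'}|^2\Bigr)^{p/2}.
\end{equation*}
The scale hypothesis $\delta(i_2)^2 \leq \delta(i_1')$ is what makes this genuinely one-dimensional: at the physical scale $\delta(i_2)^{-2}$ the $\xi_2$-uncertainty is $\delta(i_2)^2 \leq \delta(i_1')$, so the parabolic curvature is not resolved at this scale and the (rescaled) pieces $\Omega_{I_1'}$ can be identified with disjoint intervals of length $\delta(i_1'-i_1)$ on the $\xi_1$-axis, forming precisely a level of the Cantor set. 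Applying \eqref{maininput} with exponent $p$ to the appropriate frequency-localized 1D projection of $F$ then yields the stated bound; multiplying by the (essentially constant) value of $|G|^{2p}$ on $R$ and summing over $R$, with the Schwartz tails absorbed into the $\delta(k)^{-O(1)}$ factor, produces the weighted decoupling.

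The main obstacle is the geometric bookkeeping in this parabola-to-line reduction: making rigorous the claim that, after the Galilean rescaling and spatial localization to a parallelogram $R$ dual to $\Omega_{I_2}$, the function $F$ may legitimately be treated as a one-variable function Fourier-supported in a level of the Cantor set, so that \eqref{maininput} applies. The Fourier blur from the spatial localization to $R$ (a bump of size $\delta(i_2)\times\delta(i_2)^2$ in frequency) must be controlled uniformly in $R$, and the essentially disjoint projection of the $\Omega_{I_1'}$ onto intervals of the $\xi_1$-axis has to be verified under the precise constraint $\delta(i_2)^2\leq\delta(i_1')$. Both are standard manipulations with weights and wave-packet decompositions, but they are exactly the step where the $p=2$ analog would simply invoke Plancherel, and here instead one must carefully feed the geometry into the hypothesized decoupling constant $N(i_1'-i_1)^{\kappa_p(C)+\vep}$.
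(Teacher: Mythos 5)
Your outer skeleton---normalize, reduce to a weighted $l^2L^p$ decoupling of $F$ into the $F_{I_1'}$ against the weight $|G|^{2p}$, then collapse via Minkowski in $L^{p/2}(|G|^{2p}\,dx)$---is sound and matches the paper's structure around its inequality \eqref{lowerdim}. But the mechanism you propose for the weighted inequality has a fatal gap: you try to decouple in the $\xi_1$-direction, whereas the geometry forces the decoupling to be done in $\xi_2$. Localizing $F$ to a wave packet $R$ of size $\delta(i_2)^{-1}\times\delta(i_2)^{-2}$ (to make $|G|^{2p}$ approximately constant) costs a Fourier blur of width $\sim\delta(i_2)$ in $\xi_1$, dual to the $\delta(i_2)^{-1}$ side of $R$. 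The $\xi_1$-projections of the $\Omega_{I_1'}$ have width only $\delta(i_1')$, and the hypotheses $\delta(i_2)^2\le\delta(i_1')\le\delta(i_1)$ certainly allow $\delta(i_1')\ll\delta(i_2)$ (e.g.\ $\delta(i_1')=\delta(i_2)^2$). In that regime the localization blur is a factor $\ge\delta(i_2)^{-1}$ larger than the spacing, the frequency supports overlap completely, and no $\xi_1$-decoupling of $F$ on $R$ is available. You cannot enlarge $R$ in $x_1$, since $|G|^{2p}$ genuinely varies at scale $\delta(i_2)^{-1}$ there. The inequality $\delta(i_2)^2\le\delta(i_1')$ controls a blur in $\xi_2$, not $\xi_1$; it does not flatten the parabola at the scale of $R$ in the way you claim.

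The paper avoids this by using the shear $T_{I_2}$ and modulation centered at $I_2$ (not the Galilean $S_{I_1}$ centered at $I_1$), which normalizes the Fourier support of $G$ to an axis-parallel box of size $O(\delta(i_2))\times O(\delta(i_2)^2+\delta(j))$ at the origin. Multiplying $F_J$ by $G^2$ then smears the $\xi_2$-support by only $O(\delta(i_2)^2+\delta(j))\le O(\delta(i_1'))$, matching the $\xi_2$-thickness of $F_J$'s support; and since the transported $I_1$ lies at distance $\ge\delta(k)$ from the origin, the squaring map $\gamma\mapsto\gamma^2$ is bi-Lipschitz with constants $\delta(k)^{\pm O(1)}$ there, so the $\xi_2$-supports of the $F_JG^2$ are essentially disjoint up to overlap $\delta(k)^{-O(1)}$. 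Minkowski in the first spatial variable then reduces to a one-dimensional decoupling in $\xi_2$ over a level of the Cantor set whose intervals have been dilated by $c\lesssim\delta(k)^{-O(1)}$, handled by translation invariance, covering $cJ$ by $c$ translates of $J$, and boundedness of the Hilbert transform. The decoupling must live in $\xi_2$ precisely so that the blur from $G$ is quadratic in $\delta(i_2)$, which is the only way the hypothesis $\delta(i_2)^2\le\delta(i_1')$ can be exploited.
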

\begin{proof}
Fix arbitrary $\vep > 0$ and arbitrary $I_1 \in P_{\delta(i_1)}(C_{i_1})$ and $I_2 \in P_{\delta(i_2)}(C_{i_2})$
such that $d(I_1, I_2) \geq \delta(k)$.
Next fix arbitrary Schwartz functions $f$ and $g$ with Fourier support in $\bigcup_{J_1 \in P_{\delta(j)}(I_1 \cap C_j)}\Om_{J_1}$
and $\bigcup_{J_2 \in P_{\delta(j)}(I_2 \cap C_j)}\Om_{J_2}$, respectively. We may normalize
$f$ and $g$ so that
\begin{align}\label{fgnorm}
\sum_{J_1 \in P_{\delta(j)}(I_1 \cap C_j)}\nms{f_{\Om_{J_1}}}_{L^{3p}(\R^2)}^{2} = \sum_{J_2 \in P_{\delta(j)}(I_2 \cap C_j)}\nms{g_{\Om_{J_2}}}_{L^{3p}(\R^2)}^{2} = 1.
\end{align}
Thus we need to show that
\begin{equation*}
\begin{aligned}
\int_{\R^2}|\sum_{J_1 \in P_{\delta(j)}(I_1 \cap C_j)}f_{\Om_{J_1}}|^{p}&|\sum_{J_2 \in P_{\delta(j)}(I_2 \cap C_j)}g_{\Om_{J_2}}|^{2p}\\
& \lsm_{p, \vep, \dim(C), N(1)} \delta(k)^{-O(p)}N(i_{1}' - i_{1})^{p\kappa_{p}(C) + p\vep}M_{p}(j, k, i_{1}', i_2)^{3p}.
\end{aligned}
\end{equation*}
Write $I_1 := [a, a + \delta(i_1)]$ and $I_2 := [b, b + \delta(i_2)]$.
Assume that $I_2$ is to the left of $I_1$ and so $a - b > \delta(k)$; the case when $I_2$ is to the right of $I_1$ is similar.

We now essentially reduce to the case when $b = 0$.
To see this, let $T_{I_2} = (\begin{smallmatrix} 1 & 0\\-2b & 1 \end{smallmatrix})$,
$\wt{f}_{I_2}(y) := f(T_{I_2}^{\top}y)e(-y \cdot T_{I_2}(b, b^2))$, and $\wt{g}_{I_2}(y) := g(T_{I_2}^{\top}y)e(-y \cdot T_{I_2}(b, b^2))$.
By a similar argument as in the proof of \Cref{parab}, it suffices to show that
\begin{equation}\label{keyeq0a}
\begin{aligned}
\int_{\R^2}|\sum_{J_1 \in P_{\delta(j)}((I_1 - b) \cap (C_{j} - b))}(\wt{f}_{I_2})_{\Om_{J_1}}|^{p}|&\sum_{J_2 \in P_{\delta(j)}([0, \delta(i_2)] \cap (C_{j} - b))}(\wt{g}_{I_2})_{\Om_{J_2}}|^{2p}\\
& \lsm_{p, \vep, \dim(C), N(1)} \delta(k)^{-O(p)}N(i_{1}' - i_{1})^{p\kappa_{p}(C) + p\vep}M_{p}(j, k, i_{1}', i_2)^{3p}
\end{aligned}
\end{equation}
where
\begin{align*}
\sum_{J_1 \in P_{\delta(j)}((I_1 - b) \cap (C_{j} - b))}\nms{(\wt{f}_{I_2})_{\Om_{J_1}}}_{L^{3p}(\R^2)}^{2} = \sum_{J_2 \in P_{\delta(j)}([0, \delta(i_2)] \cap (C_{j} - b))}\nms{(\wt{g}_{I_2})_{\Om_{J_2}}}_{L^{3p}(\R^2)}^{2} = 1
\end{align*}
since $\det T_{I_2} = 1$.

\begin{figure}
    \centering
    \includegraphics[width=.8\textwidth]{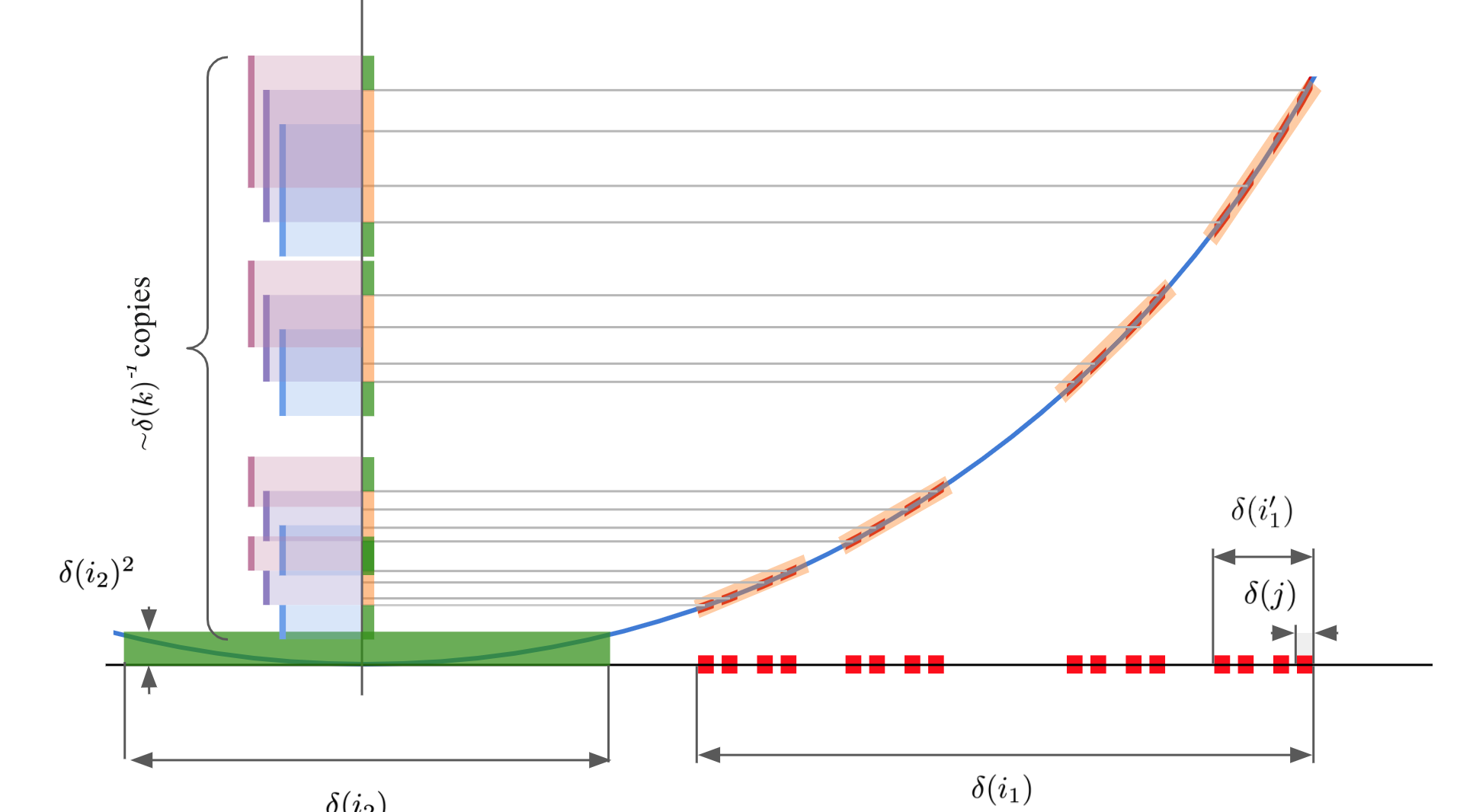}
    \caption{Scheme of the key estimate. Since $I_1$ is away from the origin and the parabola is Lipschitz on $I_1$ with Lipschitz constant $\gtrsim \delta(k)^{-O(1)}$,
    we know we can decouple vertically.
    The fact that we are multiplying by $G^2$, on the Fourier side amounts to convolving against $\wh{G} \ast \wh{G}$,
    which adds an uncertainty of size $O(\delta(i_2)^{2})$ on each vertical level. This is acceptable because,
    we can cover the overlap by $\delta(k)^{-1}$ many copies of the orange sets (these copies are in shades of blue, purple
    and maroon in the picture).}
    \label{fig:curved_cantor}
\end{figure}

Let $$G := \sum_{J_2 \in P_{\delta(j)}([0, \delta(i_2)] \cap (C_{j} - b))}(\wt{g}_{I_2})_{\Om_{J_2}}.$$ Then $G$ (and hence $G^2$)
is Fourier supported in an $O(\delta(i_2)) \times O(\delta(i_2)^{2} + \delta(j))$ rectangle centered at the origin.
For each $J \in P_{\delta(i_{1}')}((I_1 - b) \cap (C_{i_{1}'} - b))$, let $$F_{J} := \sum_{J_1 \in P_{\delta(j)}(J \cap (C_{j} - b))}(\wt{f}_{I_2})_{\Om_{J_1}}.$$
The Fourier transform of $F_J$ is supported in the horizontal strip $\{(\xi_1, \xi_2): \xi_{2} = \gamma_{J}^{2} + O(\delta(i_{1}'))\}$ where $\gamma_{J}$ is the center of $J$
and $\gamma_{J}$ is a distance $\gtrsim \delta(k)$ away from the origin.
Since $\delta(j), \delta(i_{2})^{2} \leq \delta(i_{1}')$,
$F_{J}G^{2}$ has Fourier transform supported in the horizontal strip $\{(\xi_1, \xi_2): \xi_{2} = \gamma_{J}^{2} + O(\delta(i_{1}'))\}$ as well.

Using this notation, showing \eqref{keyeq0a} is equivalent  to showing that
\begin{align}
\begin{aligned}\label{reduction}
\int_{\R^2}|\sum_{J \in P_{\delta(i_{1}')}((I_1 - b) \cap (C_{i_{1}'} - b))}F_{J}G^{2}|^{p} \lsm_{p, \vep, \dim(C), N(1)} \delta(k)^{-O(p)}N(i_{1}' - i_{1})^{p\kappa_{p}(C) + p\vep}M_{p}(j, k, i_{1}', i_2)^{3p}.
\end{aligned}
\end{align}
We now claim that
\begin{equation}
\begin{aligned}\label{lowerdim}
\|&\sum_{J \in P_{\delta(i_{1}')}((I_1 - b) \cap (C_{i_{1}'} - b))}F_{J}G^{2}\|_{L^{p}(\R^2)}\\
&\hspace{0.5in}\lsm_{p, \vep, \dim(C), N(1)} \delta(k)^{-O(1)}N(i_{1}' - i_{1})^{\kappa_{p}(C) + \vep}(\sum_{J \in P_{\delta(i_{1}')}((I_1 - b) \cap (C_{i_{1}'} - b))}\nms{F_{J}G^{2}}_{L^{p}(\R^2)}^{2})^{1/2}
\end{aligned}
\end{equation}
which, as we will show, follows from an application of Cantor set decoupling for the line given by \eqref{maininput}.
Let us see how to use \eqref{lowerdim} to prove \eqref{reduction}.
Reversing the change of variables used to obtain \eqref{keyeq0a} and applying the definition of $M_{p}(j, k, i_{1}', i_{2})$ along with the normalization of $g$ in \eqref{fgnorm} gives
\begin{align}\label{keyeq1}
\nms{F_{J}G^2}_{L^{p}(\R^2)} \leq M_{p}(j, k, i_{1}', i_2)^{3}(\sum_{J_1 \in P_{\delta(j)}((J + b) \cap C_j)}\nms{f_{\Om_{J_1}}}_{L^{3p}(\R^2)}^{2})^{1/2}
\end{align}
for each $J \in P_{\delta(i_{1}')}((I_1 - b) \cap (C_{i_{1}'} - b))$.
Combining \eqref{lowerdim} with \eqref{keyeq1} and using our normalization of $f$ in \eqref{fgnorm} then proves \eqref{reduction}.
Thus it remains to prove \eqref{lowerdim}.

First since $p \geq 2$, by Minkowski's inequality, it suffices to prove that for fixed $x \in \R^2$,
\begin{equation}\label{keyeq2}
\begin{aligned}
&\int_{\R}|\sum_{J \in P_{\delta(i_{1}')}((I_1 - b) \cap (C_{i_{1}'} - b))}F_{J}(x, y)G(x, y)^{2}|^{p}\, dy\\
& \lsm_{p, \vep, \dim(C), N(1)} \delta(k)^{-O(p)}N(i_{1}' - i_{1})^{p\kappa_{p}(C) + p\vep}(\sum_{J \in P_{\delta(i_{1}')}((I_1 - b) \cap (C_{i_{1}'} - b))}(\int_{\R}|F_{J}(x, y)G(x, y)^{2}|^{p}\, dy)^{2/p})^{p/2}.
\end{aligned}
\end{equation}
Indeed, once we obtain the above inequality, we can prove \eqref{lowerdim} by just integrating in $x$.
For fixed $x$, the Fourier transform in $y$ of $F_{J}(x, y)G(x, y)^{2}$ is supported on an interval of length
$O(\delta(i_{1}'))$ centered at $\gamma_{J}^{2}$ where $\gamma_J \gtrsim \delta(k)$ is the center of the interval $J \in P_{\delta(i_{1}')}((I_1 - b) \cap (C_{i_{1}'} - b))$.
Note that the implied constant in $O(\delta(i_{1}'))$ is independent of $J$.

Now suppose $F_{J_1}G^2$ and $F_{J_2}G^2$ had overlapping Fourier supports. Then $\gamma_{J_1}^{2} = \gamma_{J_2}^{2} + O(\delta(i_{1}'))$
and hence $\gamma_{J_1} = \gamma_{J_2} + O(\delta(i_{1}')\delta(k)^{-O(1)})$ since $\gamma_{J_1}, \gamma_{J_2} \gtrsim \delta(k)$.
Thus \eqref{keyeq2} now follows if we can show that
\begin{align*}
\int_{\R}|&\sum_{J \in P_{\delta(i_{1}')}((I_1 - b) \cap (C_{i_{1}'} - b))}f_{c J}(y)|^{p}\, dy\\
&\lsm_{p, \vep, \dim(C), N(1)} \delta(k)^{-O(p)}N(i_{1}' - i_{1})^{p\kappa_{p}(C) + p\vep}(\sum_{J \in P_{\delta(i_{1}')}((I_1 - b) \cap (C_{i_{1}'} - b))}(\int_{\R}|f_{c J}(y)|^{p}\, dy)^{2/p})^{p/2}
\end{align*}
for $1 \leq c \lsm \delta(k)^{-O(1)}$ and for arbitrary Schwartz functions $f$. Here, $cJ$ denotes the interval having the same center as $J$ but of length $c|J|$. By rescaling $I_1$ and using the fact that decoupling constants are translation invariant, this then reduces to showing that
\begin{align}
\label{eq:key-est-last-reduction}
\nms{\sum_{J \in P_{\delta(i)}(C_{i})}f_{cJ}}_{L^{p}(\R)} \lsm_{p, \vep, \dim(C), N(1)} c N(i)^{\kappa_{p}(C) + \vep}(\sum_{J \in P_{\delta(i)}(C_{i})}\nms{f_{cJ}}_{L^{p}(\R)}^{2})^{1/2}
\end{align}
for $c \geq 1$ and for arbitrary Schwartz functions $f$. (Here $i = i_1' - i_1$.)

To show \eqref{eq:key-est-last-reduction}, we can assume that $c\geq 1$ is an integer. We can find translations $\{\tau_{k} : 1 \leq k \leq c\}$ such that
for any $J \in P_{\delta(i)}(C_{i})$, the interval $cJ$ is covered by the union of $\{\tau_{k}(J) : 1 \leq k \leq c\}$.
Therefore
\begin{align*}
\nms{\sum_{J \in P_{\delta(i)}(C_{i})}f_{cJ}}_{L^{p}(\R)}
&= \nms{\sum_{k=1}^c\sum_{J \in P_{\delta(i)}(C_{i})}(f_{cJ})_{\tau_{k}(J)}}_{L^{p}(\R)}\\
&\leq c\sup_{k}\nms{\sum_{J \in P_{\delta(i)}(C_{i})}(f_{cJ})_{\tau_{k}(J)}}_{L^{p}(\R)}\\
&\lsm_{p, \vep, \dim(C), N(1)} c N(i)^{\kappa_{p}(C) + \vep}\sup_{k}(\sum_{J \in P_{\delta(i)}(C_{i})}\nms{(f_{cJ})_{\tau_{k}(J)}}_{L^{p}(\R)}^{2})^{1/2}\\
&\lsm_{p, \vep, \dim(C), N(1)} c N(i)^{\kappa_{p}(C) + \vep}(\sum_{J \in P_{\delta(i)}(C_{i})}\nms{f_{cJ}}_{L^{p}(\R)}^{2})^{1/2}
\end{align*}
where the third inequality is because decoupling is invariant under translation and \eqref{maininput}, and the last inequality is by boundedness of the Hilbert transform in $L^{p}(\R)$, $1 < p < \infty$, (see for example \cite[p. 59]{duoandikoetxea}).
This completes the proof of \eqref{eq:key-est-last-reduction} and hence the proof of \Cref{key}.
\end{proof}

\subsection{The iteration}
We first have the following lemma which allows us to interchange the last two indices in $M_{p}(j, k, i_1, i_2)$.
\begin{lemma}\label{interchange}
If $0 \leq \delta(j) \leq \delta(i_1) \leq \delta(i_2) \leq \delta(k) \leq 1$, then
\begin{align*}
M_{p}(j, k, i_1, i_2) \leq M_{p}(j, k, i_2, i_1)^{1/2}D_{3p}(\delta(j - i_2))^{1/2}.
\end{align*}
\end{lemma}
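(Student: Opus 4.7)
The plan is to fix admissible $I_1 \in P_{\delta(i_1)}(C_{i_1})$ and $I_2 \in P_{\delta(i_2)}(C_{i_2})$ with $d(I_1, I_2) \geq \delta(k)$ together with Schwartz $f, g$ Fourier supported in $\bigcup_{J_1}\Om_{J_1}$ and $\bigcup_{J_2}\Om_{J_2}$ respectively, and then interchange the roles of $F := \sum_{J_1 \in P_{\delta(j)}(I_1 \cap C_j)} f_{\Om_{J_1}}$ and $G := \sum_{J_2 \in P_{\delta(j)}(I_2 \cap C_j)} g_{\Om_{J_2}}$ in the bilinear integral by a single Cauchy--Schwarz. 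The key identity is $F^{p}G^{2p} = (F^{p}G^{p/2})(G^{3p/2})$, from which Cauchy--Schwarz yields
\begin{align*}
\int_{\R^2} F^{p}G^{2p} \;\leq\; \Bigl(\int_{\R^2} F^{2p}G^{p}\Bigr)^{1/2}\Bigl(\int_{\R^2} G^{3p}\Bigr)^{1/2}.
\end{align*}

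The first factor is precisely the integral controlled by $M_{p}(j,k,i_2,i_1)$: swapping the labels, $G$ (on the scale-$\delta(i_2)$ interval $I_2$) now plays the role of the $p$-th power function and $F$ (on the scale-$\delta(i_1)$ interval $I_1$) plays the role of the $2p$-th power function, with the bilinear separation $d(I_2, I_1) = d(I_1, I_2) \geq \delta(k)$ still satisfied. This gives
\begin{align*}
\int F^{2p}G^{p} \;\leq\; M_{p}(j,k,i_2,i_1)^{3p}\Bigl(\sum_{J_2}\nms{g_{\Om_{J_2}}}_{L^{3p}}^{2}\Bigr)^{p/2}\Bigl(\sum_{J_1}\nms{f_{\Om_{J_1}}}_{L^{3p}}^{2}\Bigr)^{p}.
\end{align*}
The second factor is just $\nms{G}_{L^{3p}(\R^2)}^{3p}$, and since $G$ is a sum of wave packets over the pieces $P_{\delta(j)}(I_2 \cap C_j)$ of the scale-$\delta(i_2)$ interval $I_2$, parabolic rescaling (\Cref{parab}) bounds $\nms{G}_{L^{3p}}$ by $D_{3p}(\delta(j-i_2))(\sum_{J_2}\nms{g_{\Om_{J_2}}}_{L^{3p}}^{2})^{1/2}$.

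Multiplying the two bounds and taking square roots produces exactly the square-function factors $(\sum_{J_1}\nms{f_{\Om_{J_1}}}_{L^{3p}}^{2})^{p/2}(\sum_{J_2}\nms{g_{\Om_{J_2}}}_{L^{3p}}^{2})^{p}$ that appear in the definition of $M_{p}(j,k,i_1,i_2)^{3p}$ (the $G$ exponents combine as $p/4 + 3p/4 = p$), together with the constant $M_{p}(j,k,i_2,i_1)^{3p/2}D_{3p}(\delta(j-i_2))^{3p/2}$. Taking a $3p$-th root yields the claimed inequality. There is no genuine obstacle here; the only point to watch is that the Cauchy--Schwarz split is chosen so that the $L^{3p}$ norms of the wave packets survive intact on both sides, which is what forces the split $F^{2p}G^{p}$ versus $G^{3p}$ rather than any other allocation of powers.
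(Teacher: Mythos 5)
Your proof is correct and is essentially identical to the paper's: the same Cauchy--Schwarz split $\int F^{p}G^{2p} \leq (\int F^{2p}G^{p})^{1/2}(\int G^{3p})^{1/2}$, followed by the definition of $M_p(j,k,i_2,i_1)$ for the first factor and parabolic rescaling (\Cref{parab}) for the second. The bookkeeping of the exponents on the square-function factors is also handled correctly.
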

\begin{proof}
This lemma follows from $\int F^{p}G^{2p} \leq (\int F^{2p}G^{p})^{1/2}(\int G^{3p})^{1/2}$
and applying the definition of $M_{p}(j, k, i_2, i_1)$ and parabolic rescaling.
\end{proof}

We are now in a good position to conclude the proof of Theorem \ref{main}. After normalization, the iteration is essentially the same as in \cite{li-efficient}. The proof follows via a contradiction argument, combining the previous lemmas and using an iteration argument.
We start normalizing the main objects that we have been considering in order to simplify our argument.
Let
$$D_{3p}'(\delta(i)) := N(i)^{-\kappa_{p}(C)}D_{3p}(\delta(i))$$
and
\begin{align*}
M_{p}'(j, k, i_1, i_2) := M_{p}(j, k, i_1, i_2)(N(j - i_1)N(j - i_2)^{2})^{-\kappa_{p}(C)/3}.
\end{align*}
With this definition, after multiplying both sides of Lemma \ref{bilinear} by $N(j - i)^{-\kappa_{p}(C)}$,
we have that if $0 \leq \delta(j) \leq \delta(i) \leq 1$, then
\begin{align}\label{reduction-n}
D_{3p}'(\delta(j)) \lsm N(i)^{-\kappa_{p}(C)}D_{3p}'(\delta(j - i)) + N(i)^{O(1)}M_{p}'(j, i, i, i).
\end{align}
The key estimate Lemma \ref{key} now becomes that if $0 \leq \delta(j) \leq \delta(i_1), \delta(i_{1}'), \delta(i_2) \leq \delta(k) \leq 1$
with $\delta(i_{2})^2 \leq \delta(i_{1}') \leq \delta(i_1)$, then for any $\vep > 0$,
\begin{align}\label{key-n}
M_{p}'(j, k, i_1, i_2) \lsm_{p, \vep, \dim(C), N(1)}\delta(k)^{-A}N(i_{1}' - i_1)^{\vep/3}M_{p}'(j, k, i_{1}', i_2)
\end{align}
for some absolute constant $A$.
Also, Lemma \ref{interchange} above  becomes
\begin{align}\label{interchange-n}
M_{p}'(j, k, i_1, i_2) \leq M_{p}'(j, k, i_2, i_1)^{1/2}D_{3p}'(\delta(j - i_2))^{1/2}.
\end{align}

\begin{proof}[Proof of Theorem \ref{main}]
Let $\ld$ be the least exponent for which the following statement is true:
\begin{equation}\label{lambda}
    D'_{3p}(\delta(j)) \lsm_{p, \vep, \dim(C), N(1)} N(j)^{\ld + \vep}
    \qquad\text{for all $j \geq 0$ and $\vep > 0$.}
\end{equation}
Trivially, $D_{3p}'(\delta(i)) \leq N(i)^{\frac{1}{2} - \kappa_{3p}(C)}$ and so \eqref{lambda} is equivalent
to the statement that
\begin{equation*}
D'_{3p}(\delta(j)) \lsm_{p, \vep, \dim(C), N(1)} N(j)^{\ld + \vep}
\qquad\text{for all $j \gtrsim 1$ and $0 < \vep \lsm 1$.}
\end{equation*}

If $\ld = 0$, then we are done, so we assume towards a contradiction that $\lambda > 0$.
Fix arbitrary $\vep > 0$, we may assume that $\vep < 1$.

If $1 \leq a \leq \frac{j}{4i}$, then $j \geq 4ai \geq 2ai \geq ai \geq i$ which imply that
we can talk about $M_{p}'(j, i, 2ai, i)$ and $M_{p}'(j, i, 4ai, 2ai)$. Applying \eqref{interchange-n}, \eqref{key-n}, and \eqref{lambda} in that order obtains
\begin{align*}
M'_{p}(j,i,2ai,ai)
&\leq
M'_{p}(j,i,ai,2ai)^{1/2}D'_{3p}(\delta(j-ai))^{1/2}
\\
&\lesssim_{p, \vep, \dim(C), N(1)}
M'_{p}(j,i,4ai,2ai)^{1/2}\delta(i)^{-A/2}N(4ai-ai)^{\vep/6}D'_{3p}(\delta(j-ai))^{1/2}
\\
&\lesssim_{p, \vep, \dim(C), N(1)}
M'_{p}(j,i,4ai,2ai)^{1/2}\delta(i)^{-A/2}N(4ai-ai)^{\vep/6}N(j-ai)^{\frac{\lambda}{2}+\frac{\vep}{2}}
\\
&= M'_{p}(j,i,4ai,2ai)^{1/2}\delta(i)^{-A/2}N(j)^{\frac{\ld + \vep}{2}}N(i)^{-a\ld/2}.
\end{align*}
Hence we have shown that for $1 \leq a \leq \frac{j}{4i}$ %
\begin{align*}
\label{eq:iteration-claim1}
M'_{p}(j,i,2ai,ai)\leq C_{p, \vep, \dim(C), N(1)} M'_{p}(j,i,4ai,2ai)^{1/2}\delta(i)^{-A/2}N(i)^{-a\ld/2}N(j)^{\frac{\ld + \vep}{2}}
\end{align*}
for some constant $C_{p, \vep, \dim(C), N(1)}$ depending only on $p, \vep$, $\dim(C)$ and $N(1)$ and $A$ is an absolute constant.

Then, we multiply both sides of the previous inequality by $N(j)^{-\lambda}$ and raise both sides to the $1/a$ power to obtain that for every integer $a$ such that $1 \leq a \leq \frac{j}{4i}$,
\begin{equation*}
\begin{split}
(N(j)^{-\ld}&M_{p}'(j, i, 2ai, ai))^{1/a}\\
& \leq (C_{p, \vep, \dim(C), N(1)}\delta(i)^{-A/2}N(j)^{\vep/2})^{1/a}N(i)^{-\ld/2}(N(j)^{-\ld}M_{p}'(j, i, 4ai, 2ai))^{1/(2a)}.
\end{split}
\end{equation*}
Therefore, for all $k\in\N$ with $2^{k + 1} \leq j/i$, the following inequality holds:
\begin{align}\label{iteration core}
N(&j)^{-\lambda} M'_{p}(j,i,2i,i)\nonumber\\
&\leq
\left(
\prod_{n=0}^{k-1}
(C_{p, \vep, \dim(C), N(1)} \delta(i)^{-A/2}N(j)^{\vep/2})^{1/2^n}
\right)
N(i)^{-k\lambda/2}
\left(
N(j)^{-\lambda}
M'_{p}(j, i,{2^{k+1}i}, {2^k}i)
\right)^{1/2^k}
\nonumber\\
&\lsm_{p, \vep, \dim(C), N(1)}
(\delta(i)^{-A/2} N(j)^{\vep/2})^{\sum_{n = 0}^{k - 1}\frac{1}{2^n}}
N(i)^{-k\lambda/2}
N(j)^{\vep/2^k}
\nonumber\\
&\lesssim_{p, \vep, \dim(C), N(1)} \delta(i)^{-O(1)}N(i)^{-k\ld/2}N(j)^{\vep}
\end{align}
where in the second inequality we have used that
\begin{align*}
M_{p}'(j, i, 2^{k + 1}i, 2^{k}i) &\leq D_{3p}'(\delta(j - 2^{k + 1}i))^{1/3}D_{3p}'(\delta(j - 2^{k}i))^{2/3}\\
&\lsm_{p, \vep, \dim(C), N(1)} N(j - 2^{k + 1}i)^{(\ld + \vep)/3}N(j - 2^{k}i)^{2(\ld + \vep)/3} \leq N(j)^{\ld + \vep}
\end{align*}
which follows from \eqref{trivialmult} and that $N$ is increasing.

Suppose $i$, $j$, and $k$ are such that $N(i) = N(j)^{1/2^{k + 1}}$ and so by multiplicativity of $N(\cdot)$, $2^{k + 1}i = j$.
Using \eqref{deltaN}, \eqref{reduction-n}, \eqref{key-n}, \eqref{lambda} and \eqref{iteration core} we conclude that
\begin{align*}
    D'_{3p}(\delta(j))&\lesssim_{p, \vep, \dim(C), N(1)} N(i)^{-\kappa_{p}(C)}D_{3p}'(\delta(j - i)) + \delta(i)^{-O(1)}N(i)^{\vep}M_{p}'(j, i, 2i, i)\notag
    \\
    &\lesssim_{p, \vep, \dim(C), N(1)} N(i)^{-\kappa_{p}(C)}N(j - i)^{\ld + \vep} + \delta(i)^{-O(1)}N(i)^{\vep - k\ld/2}N(j)^{\ld + \vep}\notag\\
    &\lsm_{p, \vep, \dim(C), N(1)} N(j)^{\ld + \vep}N(i)^{- \ld} + N(i)^{O(\frac{1}{\dim(C)}) + \vep - k\ld/2}N(j)^{\ld + \vep}\notag
    \\
    &\lesssim_{p, \vep, \dim(C), N(1)} N(j)^{\ld(1 - \frac{1}{2^{k + 1}}) + \vep}+N(j)^{\ld[1 - \frac{1}{2^{k + 1}}(\frac{k}{2} - \frac{O(\frac{1}{\dim(C)})}{\ld} - \frac{\vep}{\ld})]}N(j)^{\vep}.%
\end{align*}

Choose $K$ so that $\frac{K}{2} - \frac{O(\frac{1}{\dim(C)})}{\ld} - \frac{\vep}{\ld} \geq 1$. We have then shown that if $j = 2^{K + 1}\N$,
then for every $\vep > 0$, $$D'_{3p}(\delta(j)) \lsm_{p, \vep, \dim(C), N(1)} N(j)^{\ld(1 - \frac{1}{2^{K + 1}}) + \vep}.$$

We now upgrade this to be a statement for all $j \geq 0$.
We use almost multiplicativity, \Cref{almostmult}.
For $n \geq 0$ and $j$ such that $2^{K + 1}n \leq j \leq 2^{K + 1}(n + 1)$. Note that
$$N(2^{K + 1}n) \leq N(j) \leq N(2^{K + 1}(n + 1))$$
and
$$\delta(2^{K + 1}n) \geq \delta(j) \geq \delta(2^{K + 1}(n + 1)).$$
From almost multiplicativity and the trivial bound,
\begin{align*}
D'_{3p}(\delta(j)) &\leq D'_{3p}(\delta(2^{K + 1}n))D'_{3p}(\delta(j - 2^{K + 1}n))\\
&\lsm_{p, \vep, \dim(C), N(1)} N(2^{K + 1}n)^{\ld(1 - \frac{1}{2^{K + 1}}) + \vep}N(j - 2^{K + 1}n)^{1/2}\\
&\lsm_{p, \vep, \dim(C), N(1)} N(j)^{\ld(1 - \frac{1}{2^{K + 1}}) + \vep}(\frac{N(2^{K + 1}(n + 1))}{N(2^{K + 1}n)})^{1/2}\\
&\lsm_{p, \vep, \dim(C), N(1)} N(j)^{\ld(1 - \frac{1}{2^{K + 1}}) + \vep}N(1)^{2^K}.
\end{align*}

Therefore we have upgraded this estimate to be that for all $j \geq 0$,
$$D'_{3p}(\delta(j)) \lsm_{p, \vep, \dim(C), N(1), \ld}N(j)^{\ld(1 - \frac{1}{2^{K + 1}}) + \vep}.$$
This contradicts the minimality of $\ld$.
\end{proof}

Following the same ideas from the iteration in \cite{li-efficient}, if there is no dependence on $\dim(C)$ and $N(1)$
in \eqref{maininput} (as is the case for our examples in Section \ref{sub:Examples}), the dependence on $\dim(C)$ and $N(1)$ in $D_{3p}(\delta(i))$
is $\exp(\exp(O(\frac{1}{\vep\dim(C)}))\log N(1))$. If there is some dependence on $\dim(C)$ and $N(1)$ in \eqref{maininput}, then an examination
of the proof above shows that this same exact dependence shows up again in $D_{3p}(\delta(i))$.

\section{Decoupling for Cantor subsets of $[0, 1]$}\label{cantor_line}
In Theorem \ref{main}, we reduced the study of decoupling for a Cantor set on the parabola
to that on the line. We now proceed to carefully study the case of $l^{2}L^{2n}$ decoupling
for a Cantor subset of $[0, 1]$.
The use of $2n$ allows us to connect decoupling to number theory.

By rescaling $a$ and $f$, we have that
\begin{align*}
A_{p, m}(S) = \sup\{\nms{\sum_{\ell \in S}a(\ell)e(\ell \cdot x)}_{L^{p}([0, 1]^m)} \mid a: S \rightarrow \R_{\geq 0}, \sum_{\ell \in S}|a(\ell)|^{2} = 1\}
\end{align*}
and
\begin{align*}
K_{p}(\Om) = \sup\{\nms{\sum_{I}f_I}_{L^{p}(\R)} \mid f \textrm{ Schwartz},\sum_{I}\nms{f_I}_{L^{p}(\R)}^{2} = 1\}.
\end{align*}
Making use of that $2n$ is even, we have the following proposition.
\begin{prop}
\label{prop:decoupling-ellipsephic-optimization}
Let $S \subset \Z^m$. Then
\begin{align}
\label{eq:decoupling-ellipsephic-optimization}
A_{2n, m}(S)^{2n}
=
\sup \left\{
\sum_{t \in \Z^m}
\left(
\sum_{\substack{\ell_1, \ldots, \ell_n \in S \\ \ell_1 + \cdots + \ell_n = t}}
\prod_{i=1}^n a(\ell_i)
\right)^2
~\middle|~
a : S \to \R_{\geq 0} \text{ and }
\sum_{\ell \in S} |a(\ell)|^2 = 1
\right\}.
\end{align}
\end{prop}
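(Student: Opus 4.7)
The plan is to rewrite the $L^{2n}$ norm on the left-hand side as an $L^2$ norm of an $n$-fold product, then apply Plancherel's theorem on the torus $[0,1]^m$ to convert to a sum of squared Fourier coefficients, which can be read off combinatorially.

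Concretely, fix $a: S \to \R_{\geq 0}$ with $\sum_{\ell \in S}|a(\ell)|^2 = 1$ and set $f(x) = \sum_{\ell \in S}a(\ell) e(\ell \cdot x)$. First I would observe that, because $2n$ is an even integer,
\begin{equation*}
\nms{f}_{L^{2n}([0,1]^m)}^{2n}
=
\int_{[0,1]^m} |f(x)|^{2n}\, dx
=
\int_{[0,1]^m} |f(x)^n|^{2}\, dx
=
\nms{f^n}_{L^{2}([0,1]^m)}^{2}.
\end{equation*}
Next I expand $f^n$ directly: multiplying out,
\begin{equation*}
f(x)^n
=
\sum_{\ell_1, \dots, \ell_n \in S}
\prod_{i=1}^{n} a(\ell_i)\, e\bigl((\ell_1 + \cdots + \ell_n)\cdot x\bigr)
=
\sum_{t \in \Z^m} b(t)\, e(t \cdot x),
\end{equation*}
where $b(t) = \sum_{\ell_1 + \cdots + \ell_n = t}\prod_{i=1}^{n} a(\ell_i)$. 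Since the $\ell_i$ range over a subset of $\Z^m$, the sum $\ell_1 + \cdots + \ell_n$ is an integer lattice point, so $\{e(t\cdot x)\}_{t \in \Z^m}$ is the standard orthonormal basis of $L^2([0,1]^m)$; hence by Plancherel,
\begin{equation*}
\nms{f^n}_{L^{2}([0,1]^m)}^{2}
=
\sum_{t \in \Z^m} |b(t)|^2
=
\sum_{t \in \Z^m}
\Bigl(
\sum_{\substack{\ell_1,\dots,\ell_n \in S \\ \ell_1 + \cdots + \ell_n = t}}
\prod_{i=1}^{n} a(\ell_i)
\Bigr)^2,
\end{equation*}
where I have dropped the absolute value because $a \geq 0$ forces $b(t) \geq 0$.

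Combining the two displays yields, for every admissible $a$,
\begin{equation*}
\nms{\sum_{\ell \in S} a(\ell) e(\ell \cdot x)}_{L^{2n}([0,1]^m)}^{2n}
=
\sum_{t \in \Z^m}
\Bigl(
\sum_{\substack{\ell_1,\dots,\ell_n \in S \\ \ell_1 + \cdots + \ell_n = t}}
\prod_{i=1}^{n} a(\ell_i)
\Bigr)^2.
\end{equation*}
Taking the supremum of both sides over $a: S \to \R_{\geq 0}$ with $\sum |a(\ell)|^2 = 1$, and recalling from the rescaled characterization of $A_{2n,m}(S)$ that the left-hand supremum is precisely $A_{2n,m}(S)^{2n}$, gives \eqref{eq:decoupling-ellipsephic-optimization}. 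There is no real obstacle here; the only point to be careful about is using the even exponent to turn $|f|^{2n}$ into $|f^n|^2$ (so that Plancherel applies cleanly to a single function rather than to a convolution of $a$ with itself $n$ times), and the nonnegativity of $a$ to remove absolute values from $b(t)$ at the end.
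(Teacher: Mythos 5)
Your proof is correct and follows the same route as the paper: rewrite the $L^{2n}$ norm as $\|f^n\|_{L^2}^2$, expand $f^n$ in the Fourier basis of $[0,1]^m$ to read off the coefficients $b(t)=\sum_{\ell_1+\cdots+\ell_n=t}\prod a(\ell_i)$, apply Plancherel, and take the supremum over admissible $a$. The paper states this more tersely but the underlying argument is identical.
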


\begin{proof}
This follows immediately from the observation that
\begin{align*}
\nms{\sum_{\ell \in S}a(\ell)e(\ell \cdot x)}_{L^{2n}([0, 1]^{m})}^{2n} = \left\|\sum_{t \in \Z^m}
\left(
\sum_{\substack{\ell_1, \ldots, \ell_n \in S \\ \ell_1 + \cdots + \ell_n = t}}
\prod_{i=1}^n a(\ell_i)
\right)
e^{2 \pi i t \cdot x}\right\|_{L^{2}([0, 1]^{m})}^{2}
\end{align*}
and then applying Plancherel.
\end{proof}

\subsection{Properties of $A_{2n}(S)$}
For $S \subset \Z^m$ and $S' \subset \Z^{m'}$, we say that $\phi : S \to S'$ is a \emph{Freiman homomorphism of order $n$} if
\begin{align*}
\text{for all } x_1, \ldots, x_n, y_1, \ldots, y_n \in S, \qquad \sum_{i=1}^n x_i = \sum_{i=1}^n y_i \implies \sum_{i=1}^n \phi(x_i) = \sum_{i=1}^n \phi(y_i)
\end{align*}
(see, e.g. \cite[Section 5.3]{tao-vu}). We say that $\phi$ is a \emph{Freiman isomorphism of order $n$} if $\phi$ is a bijection and both
$\phi$ and $\phi^{-1}$ are Freiman homomorphisms of order $n$.

It follows immediately from \Cref{prop:decoupling-ellipsephic-optimization} that if $\phi$ is a bijective Freiman homomorphism of order $n$, then
\begin{equation}
\label{eq:freiman-hom-decoupling}
A_{2n, m}(S) \leq A_{2n, m'}(S'),
\end{equation}
and that \eqref{eq:freiman-hom-decoupling} becomes an equality if $\phi$ is a Freiman isomorphism of order $n$. We also have the following.

\begin{prop}
\label{prop:freiman}
Let $S \subset \Z^m$ and $S' \subset \Z^{m'}$, and let $\phi : S \to S'$ be a bijection. Let
\begin{align}
\label{eq:freiman-D}
D
=
\left\{
\sum_{i=1}^n \phi(x_i) - \sum_{i=1}^n \phi(y_i)
~\middle|~
x_1, \ldots, x_n, y_1, \ldots, y_n \in S
\text{ and }
\sum_{i=1}^n x_i = \sum_{i=1}^n y_i
\right\}
\end{align}
Then
\begin{align}
\label{eq:freiman-dec-ineq}
A_{2n, m}(S) \leq |D|^{\frac{1}{2n}}A_{2n, m'}(S').
\end{align}
\end{prop}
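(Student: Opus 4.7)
The plan is to reduce to the $L^{2n}$-expansion formula of \Cref{prop:decoupling-ellipsephic-optimization} and transfer weights from $S$ to $S'$ through $\phi$. Fix $a: S \to \R_{\geq 0}$ with $\sum_\ell a(\ell)^2 = 1$, and define $b : S' \to \R_{\geq 0}$ by $b(\phi(\ell)) := a(\ell)$. Since $\phi$ is a bijection, $\sum_{\ell'} b(\ell')^2 = 1$. Writing
\begin{align*}
F(t) := \sum_{\substack{\ell_1, \dots, \ell_n \in S \\ \ell_1 + \cdots + \ell_n = t}} \prod_{i=1}^n a(\ell_i), \qquad G(s) := \sum_{\substack{\ell_1, \dots, \ell_n \in S \\ \phi(\ell_1) + \cdots + \phi(\ell_n) = s}} \prod_{i=1}^n a(\ell_i),
\end{align*}
the goal is to show $\sum_t F(t)^2 \leq |D| \sum_s G(s)^2$, since the right-hand side is then bounded by $A_{2n,m'}(S')^{2n}$ via \Cref{prop:decoupling-ellipsephic-optimization}.

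To set up the comparison, I would refine both sums by introducing
\begin{align*}
F_{t,s} := \sum_{\substack{\ell_1, \dots, \ell_n \in S \\ \sum \ell_i = t, \,\sum \phi(\ell_i) = s}} \prod_{i=1}^n a(\ell_i),
\end{align*}
so that $F(t) = \sum_s F_{t,s}$ and $G(s) = \sum_t F_{t,s}$. The key combinatorial input is that for each fixed $t$, the number of $s$ with $F_{t,s} \neq 0$ is at most $|D|$: if $(x_1, \dots, x_n)$ and $(y_1, \dots, y_n)$ are two $n$-tuples in $S^n$ both summing to $t$, then $\sum \phi(x_i) - \sum \phi(y_i) \in D$ by the very definition \eqref{eq:freiman-D}; fixing one reference tuple shows that all relevant values of $s$ lie in a single translate of $D$.

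With this in hand, Cauchy--Schwarz over $s$ gives $F(t)^2 \leq |D| \sum_s F_{t,s}^2$, so $\sum_t F(t)^2 \leq |D| \sum_{t,s} F_{t,s}^2$. For the other direction I would exploit nonnegativity: since all $F_{t,s} \geq 0$, expanding the square gives $G(s)^2 = (\sum_t F_{t,s})^2 \geq \sum_t F_{t,s}^2$, whence $\sum_{t,s} F_{t,s}^2 \leq \sum_s G(s)^2$. Chaining these inequalities yields $\sum_t F(t)^2 \leq |D|\, A_{2n,m'}(S')^{2n}$, and taking the supremum over $a$ and extracting the $2n$-th root proves \eqref{eq:freiman-dec-ineq}.

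The proof is essentially bookkeeping once one has the right decomposition; the one substantive step is the observation that $D$ controls the fiber size of the map $t \mapsto \{s : F_{t,s} \neq 0\}$, which is precisely why $|D|$ appears with the exponent $1/(2n)$. Note that when $\phi$ is a Freiman homomorphism of order $n$ one has $D = \{0\}$, recovering \eqref{eq:freiman-hom-decoupling}; the only potential subtlety is keeping track of nonnegativity in the step $G(s)^2 \geq \sum_t F_{t,s}^2$, which fails in general for signed weights but is exactly what \Cref{prop:decoupling-ellipsephic-optimization} licenses us to assume.
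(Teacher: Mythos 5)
Your proof is correct. It is a genuine reorganization of the paper's argument rather than a line-by-line match, so a brief comparison is in order.

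The paper proceeds by expanding $\sum_t F(t)^2$ as a sum over $2n$-tuples $(x_1,\ldots,x_n,y_1,\ldots,y_n)\in S^{2n}$ with $\sum x_i = \sum y_i$, relaxing the constraint (using that $\phi$ maps the original constraint into $D$) to a sum over $S'$-tuples with $\sum x_i' - \sum y_i' \in D$, then writing this as $\sum_{s-t\in D} B(s)B(t)$ and finishing with the AM--GM inequality $B(s)B(t)\leq \tfrac12(B(s)^2+B(t)^2)$ plus the observation that each fixed $t$ admits at most $|D|$ values of $s$ with $s-t\in D$. You instead introduce the joint refinement $F_{t,s}$, prove the fiber bound $\#\{s : F_{t,s}\neq 0\}\leq |D|$ directly (by anchoring to a reference tuple), and extract the $|D|$ via Cauchy--Schwarz on the $s$-fiber, followed by the $\ell^2\leq\ell^1$ inequality (valid for nonnegative terms) to compare $\sum_t F_{t,s}^2$ with $G(s)^2$. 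Both proofs hinge on the same two facts — the constraint $\sum x_i=\sum y_i$ forces $\sum\phi(x_i)-\sum\phi(y_i)\in D$, and nonnegativity of the weights — but your organization makes the role of $|D|$ as a fiber cardinality more transparent, whereas the paper's version keeps everything in terms of solution-counting sums. Your version also avoids the relaxation step: you never enlarge the index set, you only refine it, which some may find conceptually cleaner. The one point worth noting explicitly in a write-up (you do gesture at it): the reference-tuple argument implicitly assumes $F(t)\neq 0$ so that a reference tuple exists; when $F(t)=0$ the Cauchy--Schwarz step is vacuous, so this is harmless.
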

Note that if $\phi$ is a bijective Freiman homomorphism of order $n$, then $D = \{0\}$,
so \eqref{eq:freiman-dec-ineq} becomes \eqref{eq:freiman-hom-decoupling}.
Thus, \Cref{prop:freiman} is a variant of \eqref{eq:freiman-hom-decoupling} for
when the bijection $\phi$ is not a Freiman homomorphism of order $n$, but is ``close'' to being one
(in the sense that $D$ is small).
This proposition should also be compared to \cite[Lemma 2.2]{biggs-quadratic}.

\begin{proof}
Let $a : S \to \R_{\geq 0}$ such that $\sum_{\ell \in S} a(\ell)^2 = 1$. Define $a' : S' \to \R_{\geq 0}$ by $a' = a \circ \phi^{-1}$. Then by the definition of $D$,
\begin{align}
\sum_{t \in \Z^m}
\left(
\sum_{\substack{x_1, \ldots, x_n \in S \\ x_1 + \cdots + x_n = t}}
\prod_{i=1}^n a(x_i)
\right)^2
&=
\sum_{\substack{x_1, \ldots, x_n \in S \\ y_1, \ldots, y_n \in S \\ x_1 + \cdots + x_n = y_1 + \cdots + y_n}}
\left(\prod_{i=1}^n a(x_i)\right) \left(\prod_{i=1}^na(y_i)\right) \nonumber
\\
\label{eq:freiman-expand-sum-S'}
&\leq
\sum_{t \in D}
\sum_{\substack{x_1', \ldots, x_n' \in S' \\ y_1', \ldots, y_n' \in S' \\ \sum_{i = 1}^{n} x_i' - \sum_{i = 1}^{n} y_i' = t}}
\left(\prod_{i=1}^n a'(x_i')\right) \left(\prod_{i=1}^na'(y_i')\right)
\end{align}
Define
\begin{align*}
B(t) =
    \sum_{x_{1}', \ldots, x_{n}' \in S:
    \sum^n_{i=1} x_i' = t
    }
\prod_{i=1}^n a'(x_i')
\end{align*}
so that the right-hand side of \eqref{eq:freiman-expand-sum-S'} is
\begin{align*}
=
\sum_{s, t \in \Z^{m'} : s-t \in D}
B(s)B(t)
&\leq
\sum_{s, t \in \Z^{m'}: s-t \in D}
\frac{B(s)^2+B(t)^2}{2}\\
& = \frac{1}{2}\sum_{\st{s, t \in \Z^{m'}\\s - t \in D}}B(s)^{2} + \frac{1}{2}\sum_{\st{s, t \in \Z^{m'}\\s - t \in D}}B(t)^{2}
\leq
|D|
\sum_{t \in \Z^{m'}}
B(t)^2
\end{align*}
Thus,
\begin{align*}
\sum_{t \in \Z^m}
\left(
\sum_{\substack{x_1, \ldots, x_n \in S \\ x_1 + \cdots + x_n = t}}
\prod_{i=1}^n a(x_i)
\right)^2
\leq
|D|
\sum_{t' \in \Z^{m'}}
\left(
\sum_{\substack{x_1', \ldots, x_n' \in S' \\ x_1' + \cdots + x_n' = t'}}
\prod_{i=1}^n a'(x_i')
\right)^2
\end{align*}
which by \Cref{prop:decoupling-ellipsephic-optimization} implies \eqref{eq:freiman-dec-ineq}.
\end{proof}

\begin{prop}
\label{prop:tensor-decoupling}
For $S \subset \Z^{m}$, $S' \subset \Z^{m'}$,
\begin{align*}
A_{2n, m + m'}(S \times S') = A_{2n, m}(S)A_{2n, m'}(S')
\end{align*}
\end{prop}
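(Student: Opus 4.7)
The plan is to prove each inequality separately, with the lower bound being easy and the upper bound requiring a careful iterated application of decoupling in each factor.

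For the lower bound $A_{2n, m+m'}(S\times S') \geq A_{2n,m}(S)\,A_{2n,m'}(S')$, I would test \Cref{prop:decoupling-ellipsephic-optimization} on product coefficients. Choose $b : S \to \R_{\geq 0}$ and $b' : S' \to \R_{\geq 0}$ with $\sum b^2 = \sum (b')^2 = 1$ which nearly optimize the suprema defining $A_{2n,m}(S)^{2n}$ and $A_{2n,m'}(S')^{2n}$, respectively, and set $a(\ell,\ell') := b(\ell)b'(\ell')$. Then $\sum a^2 = 1$. Since the constraint $\sum_i(\ell_i,\ell'_i) = (t,t')$ separates into $\sum_i \ell_i = t$ and $\sum_i \ell'_i = t'$ and the product $\prod_i a(\ell_i,\ell'_i)$ factors as $\prod_i b(\ell_i)\cdot \prod_i b'(\ell'_i)$, the resulting sum in \Cref{prop:decoupling-ellipsephic-optimization} factors into the product of the corresponding sums for $b$ and $b'$. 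Taking the supremum yields the desired inequality.

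For the upper bound, fix $a : S\times S' \to \R_{\geq 0}$ with $\|a\|_{\ell^2(S\times S')}=1$ and let
\[
\Phi(x,x') := \sum_{(\ell,\ell')\in S\times S'} a(\ell,\ell')\,e(\ell\cdot x + \ell'\cdot x').
\]
It suffices to show $\nms{\Phi}_{L^{2n}([0,1]^{m+m'})} \leq A_{2n,m}(S)\,A_{2n,m'}(S')$. Using Fubini, write
\[
\nms{\Phi}_{L^{2n}}^{2n} = \int_{[0,1]^m} \nms{\Phi(x,\cdot)}_{L^{2n}([0,1]^{m'})}^{2n}\,dx,
\]
and for each fixed $x$ set $c_x(\ell') := \sum_\ell a(\ell,\ell')e(\ell\cdot x)$ so that $\Phi(x,x') = \sum_{\ell'} c_x(\ell') e(\ell'\cdot x')$. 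The complex-valued extension of the decoupling bound (which follows by combining \Cref{prop:decoupling-ellipsephic-optimization} with the triangle inequality $|\sum \prod c_x(\ell'_i)| \leq \sum \prod |c_x(\ell'_i)|$ applied coefficient-wise) gives
\[
\nms{\Phi(x,\cdot)}_{L^{2n}([0,1]^{m'})} \leq A_{2n,m'}(S')\,\Bigl(\sum_{\ell'}|c_x(\ell')|^2\Bigr)^{1/2}.
\]

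The remaining step is to integrate the right-hand side to the $2n$-th power in $x$. Since $2n \geq 2$, Minkowski's integral inequality (swapping an outer $L^{2n}_x$ with an inner $\ell^2_{\ell'}$) gives
\[
\Bigl(\int_{[0,1]^m}\Bigl(\sum_{\ell'}|c_x(\ell')|^2\Bigr)^{n}\,dx\Bigr)^{\!1/(2n)} \leq \Bigl(\sum_{\ell'}\nms{c_x(\ell')}_{L^{2n}_x}^{2}\Bigr)^{\!1/2}.
\]
For each fixed $\ell'$, $c_x(\ell')$ is a Fourier series in $x$ with non-negative coefficients $a(\cdot,\ell')$, so by the definition of $A_{2n,m}(S)$,
\[
\nms{c_x(\ell')}_{L^{2n}_x}^{2} \leq A_{2n,m}(S)^{2}\sum_{\ell}a(\ell,\ell')^{2}.
\]
Summing over $\ell'$ produces $A_{2n,m}(S)^{2}\nms{a}_{\ell^2}^2 = A_{2n,m}(S)^{2}$. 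Substituting upward yields $\nms{\Phi}_{L^{2n}}^{2n} \leq A_{2n,m}(S)^{2n}A_{2n,m'}(S')^{2n}$, which completes the upper bound after taking the supremum over $a$.

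The only mildly delicate steps are the passage to complex-valued coefficients in the decoupling bound for $A_{2n,m'}(S')$ (handled by taking absolute values inside \Cref{prop:decoupling-ellipsephic-optimization}) and applying Minkowski's integral inequality with the correct exponent ordering ($2n$ outside, $2$ inside). Neither is a serious obstacle.
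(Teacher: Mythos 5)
Your proof is correct and takes essentially the same route as the paper: the lower bound is a tensor construction, and the upper bound proceeds by slicing one variable, applying the discrete restriction constant to the inner sum (now with complex coefficients), then using Minkowski's inequality to swap $L^{2n}$ and $\ell^{2}$, and finally applying the other discrete restriction constant. The one difference is cosmetic — you freeze $x$ and decouple in $x'$ first, while the paper freezes $x'$ and decouples in $x$ first. You were also right to flag (and the paper glosses over) that $A_{2n,m}$ is defined only for nonnegative coefficients while the frozen inner sum produces complex ones; your observation that \Cref{prop:decoupling-ellipsephic-optimization} plus the pointwise triangle inequality extends the bound to complex $a$ is exactly the justification needed.
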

\begin{proof}
First, we will show  that
\begin{equation}\label{eq-Xgeq}
    A_{2n, m + m'}(S \times S') \geq A_{2n, m}(S)A_{2n, m'}(S').
\end{equation}
For $a:S\to\R_{\geq0}$ and $a':S'\to\R_{\geq0}$, we define $(a\otimes a'):S\times S'\to \R_{\geq0}$ by $$(a\otimes a')(l,l')=a(l)a'(l').$$ Observe that
\begin{align*}
    \|\sum_{(l,l')\in S\times S'}(a\otimes a')(\ell,\ell')&e((\ell,\ell')\cdot (x,x'))\|_{L^{2n}(\T^{m+m'})}\\
    &=\|\sum_{\ell\in S}a(\ell)e(\ell\cdot x)\|_{L^{2n}(\T^{m})}  \|\sum_{\ell'\in S'}a'(\ell')e(\ell'\cdot x')\|_{L^{2n}(\T^{m'})}
\end{align*}
and
$$\|a\otimes a'\|_{\ell^2(S\times S')}=\|a\|_{\ell^2(S)}\|a'\|_{\ell^2(S')}. $$
We therefore obtain \eqref{eq-Xgeq}.

It now remains to show the reverse inequality
\begin{equation}\label{eq-Xleq}
    A_{2n, m + m'}(S \times S') \leq A_{2n, m}(S)A_{2n, m'}(S').
\end{equation}
Fix $x' \in \T^{m'}$. Then we view $b_{x'}(\ell) := \sum_{\ell' \in S'}a(\ell, \ell')e(\ell' \cdot x')$
as a function of $\ell \in S$. We have
\begin{align*}
\|\sum_{\ell \in S}(\sum_{\ell' \in S'}a(\ell, \ell')e(\ell' \cdot x'))e(\ell \cdot x)\|_{L^{2n}_{x}(\T^m)}^{2n} &= \|\sum_{\ell \in S}b_{x'}(\ell)e(\ell \cdot x)\|_{L^{2n}_{x}(\T^m)}^{2n}\\
& \leq A_{2n, m}(S)^{2n}(\sum_{\ell \in S}|b_{x'}(\ell)|^{2})^{2n/2}.
\end{align*}
Next integrating in $\T^{m'}$ gives
\begin{align*}
\|\sum_{\ell \in S, \ell' \in S'}a(\ell, \ell')&e(\ell' \cdot x')e(\ell \cdot x)\|_{L^{2n}(\T^{m + m'})} \leq A_{2n, m}(S)\|(\sum_{\ell \in S}|\sum_{\ell' \in S'}a(\ell, \ell')e(\ell' \cdot x')|^{2})^{1/2}\|_{L^{2n}_{x'}(\T^{m'})}.
\end{align*}
Since $2n \geq 2$, applying Minkowski's inequality allows us to interchange the $L^{2n}_{x'}$ and the $\ell^2$ sum over $\ell \in S$. Thus the above is controlled by
\begin{align*}
A_{2n, m}(S)(\sum_{\ell \in S}\|&\sum_{\ell' \in S'}a(\ell, \ell')e(\ell' \cdot x')\|_{L^{2n}_{x'}(\T^{m'})}^{2})^{1/2}\leq A_{2n, m}(S)A_{2n, m}(S')(\sum_{\ell \in S, \ell' \in S'}|a(\ell, \ell')|^{2})^{1/2}
\end{align*}
from which \eqref{eq-Xleq} follows.
\end{proof}

\subsection{Arithmetic Cantor sets and ellipsephic sets}\label{arith}
Let
\begin{align}\label{alphadef}
\alpha_{2n}(\Ellipsephic{q}) := \limsup_{j \rightarrow \infty}\frac{\log A_{2n, 1}(\Level[j]{\Ellipsephic{q}})}{\log k^j}
\end{align}
and similarly let
\begin{align}\label{kappadef}
\kappa_{2n}(\ArCantor{q}) := \limsup_{j \rightarrow \infty}\frac{\log K_{2n}(\Level[j]{\mathscr{C}_{q}^{\{d_1, \ldots, d_k\}}})}{\log k^j}.
\end{align}
We call these the decoupling exponents of $A_{2n, 1}(\Level[j]{\Ellipsephic{q}})$ and $K_{2n}(\Level[j]{\mathscr{C}_{q}^{\{d_1, \ldots, d_k\}}})$, respectively.

In this section we will show that from a decoupling point of view the sets $\Level[j]{C_{q}^{\{d_1, \ldots, d_k\}}}$ and $\Level[j]{\Ellipsephic q}$ have similar nature.
Namely, we will prove the following proposition. This allows us to upgrade results obtained from discrete restriction of ellipsephic sets to decoupling for
arithmetic Cantor sets. In particular, later in \Cref{prop:no_carryover} when the ellipsephic set does not have carryover, the discrete
restriction problem has a particularly nice structure.

\begin{prop}
\label{prop:ellipsephic_to_cantor}
For an integer $n \geq 1$,
	\begin{equation}
	\label{eq:ellipsephic_to_cantor}
K_{2n}(\Level[j]{\PartCantor q}) \sim A_{2n, 1}(\Level[j]{\Ellipsephic q})
	\end{equation}
	where the implicit constant is an absolute constant. In particular by \eqref{alphadef} and \eqref{kappadef}, this implies that
$$\kappa_{2n}(\ArCantor{q}) = \alpha_{2n}(\Ellipsephic{q}).$$
\end{prop}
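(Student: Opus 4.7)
The plan is to exploit the identity $\Level[j]{\ArCantor q} = q^{-j}(\Level[j]{\Ellipsephic q} + [0,1])$ stated in the introduction, which identifies the intervals $I \in \Level[j]{\PartCantor q}$ with translates $I_m := [q^{-j} m, q^{-j}(m+1)]$ for $m \in S := \Level[j]{\Ellipsephic q}$. Since decoupling constants are invariant under global rescaling of the frequency variable, $K_{2n}(\Level[j]{\PartCantor q}) = K_{2n}(\bigcup_{m \in S}[m, m+1])$, so it suffices to prove
$$K_{2n}\Big(\bigcup_{m \in S}[m, m+1]\Big) \sim A_{2n, 1}(S)$$
with an absolute constant, via two matching inequalities.

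For $A_{2n, 1}(S) \lesssim K_{2n}$, I would fix a Schwartz $\psi$ with $\widehat\psi$ supported in $[0, 1]$ and $|\psi|$ bounded below by a positive absolute constant on $[-1/2, 1/2]$; one such $\psi$ arises by taking $\widehat\psi$ a nonnegative smooth bump on $[1/4, 3/4]$, so that $\psi(x) = e(x/2) \widecheck\Phi(x)$ for a smooth even bump $\Phi$ on $[-1/4, 1/4]$ with $\int \Phi > 0$. Given $a : S \to \R_{\geq 0}$ with $\|a\|_{\ell^2} = 1$, apply the definition of $K_{2n}$ to the test functions $f_m(x) = a(m) e(mx) \psi(x)$, whose Fourier supports lie in $[m, m+1]$; since $|f_m| = a(m)|\psi|$, the right-hand side equals $K_{2n} \|\psi\|_{L^{2n}} \|a\|_{\ell^2}$. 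On the left, $\sum_m f_m(x) = \psi(x) P(x)$ with $P(y) = \sum_m a(m) e(my)$ being $1$-periodic. Periodizing $|\psi|^{2n}$ against $|P|^{2n}$ and using the uniform lower bound $\sum_k |\psi(y+k)|^{2n} \gtrsim 1$ on $[0, 1]$ extracts $\|P\|_{L^{2n}([0, 1])} \lesssim K_{2n}$.

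For $K_{2n} \lesssim A_{2n, 1}(S)$, I would use Plancherel to expand the $L^{2n}$ norm, in the spirit of the paper's introductory remark. Given Schwartz $f = \sum_m f_m$ with $\widehat{f_m}$ supported in $[m, m+1]$, write $f_m(x) = g_m(x) e(mx)$ so that $\widehat{g_m}$ is supported in $[0, 1]$, and apply
$$\|f\|_{L^{2n}(\R)}^{2n} = \|\widehat{f^n}\|_{L^2(\R)}^2, \qquad \widehat{f^n} = \sum_{s \in \Z} H_s, \quad H_s := \sum_{m_1 + \cdots + m_n = s} \widehat{f_{m_1}} * \cdots * \widehat{f_{m_n}}.$$
Each $H_s$ is supported in $[s, s + n]$, so at most $O(n)$ summands contribute at any frequency; Cauchy--Schwarz yields $\|f\|_{L^{2n}}^{2n} \lesssim_n \sum_s \|H_s\|_{L^2}^2$. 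By Plancherel, $\|H_s\|_{L^2}^2 = \|\sum_{\sum m_i = s} g_{m_1} \cdots g_{m_n}\|_{L^2(\R)}^2$ (the common modulation $e(sx)$ drops out). Interchanging the sum in $s$ with the $L^2$ integral and applying the triangle inequality, Proposition \ref{prop:decoupling-ellipsephic-optimization} applies pointwise in $x$ with nonnegative coefficients $|g_m(x)|$, giving
$$\sum_s \Big|\sum_{\sum m_i = s} g_{m_1}(x) \cdots g_{m_n}(x)\Big|^2 \leq A_{2n, 1}(S)^{2n} \Big(\sum_m |g_m(x)|^2\Big)^n.$$
Integrating in $x$ and invoking Minkowski's inequality in $\ell^n(m)$, namely $\int (\sum_m |g_m|^2)^n \, dx \leq (\sum_m \|g_m\|_{L^{2n}}^2)^n = (\sum_m \|f_m\|_{L^{2n}}^2)^n$, completes the bound.

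The main obstacle is bookkeeping to ensure all constants are absolute (independent of $q$ and $j$): the Cauchy--Schwarz factor of $n$, the choice of $\psi$ with uniformly bounded-below periodization, and the Minkowski step each contribute only universal constants. The key conceptual point making the reverse direction clean is that, after the Plancherel expansion, the modulation phases from different $n$-tuples align exactly as in the discrete restriction problem, so Proposition \ref{prop:decoupling-ellipsephic-optimization} applies pointwise in $x$ and no wave-packet discretization of the Schwartz data is needed.
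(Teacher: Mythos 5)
Your proof is correct, with absolute constants in both directions. The $K_{2n} \lesssim A_{2n,1}$ direction is structurally close to the paper's: both expand the $L^{2n}$ norm via Plancherel, use the $O(n)$-overlap of the supports $[s, s+n]$ to gain a harmless $(n+1)^{1/(2n)}$ factor, and invoke \Cref{prop:decoupling-ellipsephic-optimization}. The difference is ordering: you apply the discrete proposition pointwise in $x$ to the nonnegative weights $|g_m(x)|$ (after the triangle inequality) and then close with Minkowski's integral inequality, whereas the paper first uses H\"older on each term to pass to products of $L^{2n}$ norms and then applies an argument as in \Cref{prop:freiman} to the sequence $a(\ell) = \|f_\ell\|_{2n}$. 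Either ordering works. The $A_{2n,1} \lesssim K_{2n}$ direction is where you genuinely diverge from the paper: you test the decoupling inequality against the modulated fixed profile $f_m(x) = a(m)e(mx)\psi(x)$ and extract $\|P\|_{L^{2n}([0,1])}$ by periodizing $|\psi|^{2n}$, using the uniform lower bound $\sum_k |\psi(y+k)|^{2n} \gtrsim 1$. The paper instead sets $\widehat{f_\ell} = a(\ell)\phi_\ell$ with $\phi$ a bump of width $1/n$ (after rescaling) and exploits that the $n$-fold convolutions $\Phi_t = (\phi^{*n})_t$ have pairwise disjoint supports, so Plancherel on the frequency side yields the lower bound directly. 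Your periodization route avoids the $\phi^{*n}$ construction and is arguably more standard; the paper's construction is tailored to make the orthogonality exact. Both yield implicit constants of size $O(n^{1/(2n)})$, hence absolute.
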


\begin{proof}
Let $E_j := \Level[j]{\Ellipsephic q}$ and $C_j := \Level[j]{C_{q}^{\{d_1, \ldots, d_k\}}}$. For $\ell\in E_j$, %
we will denote by $I_\ell$ the interval $[q^{-j}\ell, q^{-j}(\ell+1)]$, so that $C_j = \bigcup_{\ell \in E_j} I_\ell$.

First we show the $\lesssim$ direction in \eqref{eq:ellipsephic_to_cantor}. Let $f(x)$ be a Schwartz function Fourier supported on $C_j$ such that $\sum_{\ell \in E_j} \| (f *\check 1_{I_\ell})\|_{L^{2 n}(\mathbb R)}^2 = 1$. %
	 Let $f_{\ell} = f*\check 1_{I_{\ell}}$. Note that for $\ell_1, \ldots, \ell_n \in E_j$, the Fourier transform of $\prod_{j = 1}^{n}f_{\ell_i}$ is supported in $[q^{-j}\sum_{i=1}^n\ell_i, q^{-j}(\sum_{i=1}^n\ell_i + n)]$. Therefore, by Plancherel and H\"older,
    \begin{align*}
    \int_{\R}
        |\sum_{\ell\in E_j} f_\ell|^{2n} dx =
    \int_{\R}
        |\sum_{\ell_1, \ldots, \ell_n\in E_j} \prod_{i=1}^n f_{\ell_i} |^{2} dx
        &=
    \int_{\R}
    \sum_{\substack{
    |\sum^n_{i=1} \ell_i - \tilde \ell_i|\le  n \\
    \ell_1, \dots, \ell_n \in E_j \\
    \tilde \ell_1, \dots, \tilde \ell_n \in E_j \\
    }}
    \prod_{i=1}^n f_{\ell_i} \bar f_{\tilde \ell_i} dx
    \\
    &\le
    \sum_{\substack{
    |\sum^n_{i=1} \ell_i - \tilde \ell_i|\le  n \\
    \ell_1, \dots, \ell_n \in E_j \\
    \tilde \ell_1, \dots, \tilde \ell_n \in E_j \\
    }}
    \prod_{i=1}^n \|f_{\ell_i}\|_{L^{2n}(\R)} \| f_{\tilde \ell_i}\|_{L^{2n}(\R)}. \nonumber
    \end{align*}
Then arguing as in the proof of \Cref{prop:freiman}, we have
\begin{align*}
\sum_{t=-n}^n
    \sum_{\substack{
    \sum^n_{i=1} \ell_i - \tilde \ell_i =t \\
    \ell_1, \dots, \ell_n \in E_j \\
    \tilde \ell_1, \dots, \tilde \ell_n \in E_j \\
    }}
    \prod_{i=1}^n \|f_{\ell_i}\|_{2n} \| f_{\tilde \ell_i}\|_{2n}
&\leq
(2n+1)
\sum_{t \in \Z}
\left(
\sum_{\substack{\ell_1, \dots, \ell_n \in E_j \\ \ell_1 + \cdots + \ell_n = t}}
\prod_{i=1}^n \|f_{\ell_i}\|_{2n}
\right)^2
\\
&\leq
(2n+1)  A_{2n, 1}([\Ellipsephic{q}]_{j})^{2n}
\end{align*}
where the last inequality is by \Cref{prop:decoupling-ellipsephic-optimization} and that $\sum_{\ell}\nms{f_{\ell}}_{L^{2n}(\R)}^{2} = 1$.

Next we show the $\gtrsim$ direction in \eqref{eq:ellipsephic_to_cantor}. Let $\phi \in C_{c}^{\infty}(\R)$ be a smooth nonnegative function which is equal
to $cn$ on $[\frac{0.01}{n}, \frac{0.99}{n}]$ and vanishes outside $[0, 1/n]$ and where $c$ is an absolute constant chosen so that
$\nms{\phi}_{1} = 1$. Then observe that $\nms{\phi}_{2} \sim n^{1/2}$ and $\nms{\wc{\phi}}_{\infty} \leq 1$ which imply that $\nms{\wc{\phi}}_{2n} \lsm n^{1/2n}$.

Define $\Phi = \phi^{*n}$, the $n$-fold convolution. Then $\Phi \geq 0$, $\Phi$ is supported in $[0,1]$ and $1 = \| \Phi\|_1 \leq \| \Phi \|_2$. For $\ell \in \Z$, define $\phi_\ell(x) = q^{j} \phi(q^j x - \ell)$. Also define $\Phi_\ell(x) = q^j \Phi(q^j x - \ell)$, so that $\phi_{\ell_1} * \cdots * \phi_{\ell_n} = \Phi_{\ell_1 + \cdots + \ell_n}$
and $\Phi_{\ell}$ is supported on $I_{\ell}$.

Since $E_j$ is finite there is a function $a : E_j \to \R$, which attains the supremum in \eqref{eq:decoupling-ellipsephic-optimization}. Let $a : E_j \to \R$ attain the maximum in \eqref{eq:decoupling-ellipsephic-optimization}. For $\ell \in E_j$, define $f_\ell$ by $\widehat f_\ell = a(\ell) \phi_\ell$. Observe that
\begin{align*}
\sum_{\ell_1, \ldots, \ell_n \in E_j}
\widehat f_{\ell_1} * \cdots * \widehat f_{\ell_n}
=
\sum_{\ell_1, \ldots, \ell_n \in E_j}
\left(\prod_{i=1}^n a(\ell_i)\right) \Phi_{\ell_1+ \cdots + \ell_n}
=
\sum_{t \in \Z}
\left(
\sum_{
    \sum^n_{i=1} \ell_i = t
    }
\prod_{i=1}^n a(\ell_i)
\right)
\Phi_t
\end{align*}
We note that the supports of $\Phi_t$ for $t \in \Z$ are disjoint, and that $\| \Phi_t\|_2^2 \geq q^{j}$, so using Plancherel we obtain
\begin{align}\label{eq:lhs-in-decoupling}
\left\| \sum_{\ell \in E_j} f_\ell \right\|_{2n}^{2n}
&=
\left\|
\sum_{\ell_1, \ldots, \ell_n \in E_j}
\widehat f_{\ell_1} * \cdots * \widehat f_{\ell_n}
\right\|_{2}^{2}
\geq
q^j
\sum_{t \in \Z}
\left(
\sum_{
    \sum^n_{i=1} \ell_i = t
    }
\prod_{i=1}^n a(\ell_i)
\right)^2 =
q^{j}A_{2n, 1}([\Ellipsephic{q}]_{j})^{2n} %
\end{align}
Next, $\|f_\ell\|_{2n} \lsm n^{1/(2n)} |a(\ell)| q^{j/(2n)}$, so
\begin{align}
\label{eq:rhs-in-decoupling}
(\sum_{\ell \in E_j} \| f_\ell \|_{2n}^2)^n
\lsm
nq^{j} (\sum_{\ell \in E_j} |a(\ell)|^2)^{n}
=
nq^{j}
\end{align}
By comparing \eqref{eq:lhs-in-decoupling} with \eqref{eq:rhs-in-decoupling}, we see that
\begin{align*}
A_{2n, 1}([\Ellipsephic{q}]_{j}) \lsm n^{1/(2n)}K_{2n}([\PartCantor{q}]_{j})
\end{align*}
as desired.
\end{proof}

Recall that given an $n$ we say that $[\Ellipsephic{q}]_{j}$ has no carryover
if $nd_k < q$.
In the no carryover case, $A_{2n, 1}(\Level[j]{\Ellipsephic{q}})$ has a particularly nice structure and we are able to characterize
the extremizer of the associated discrete restriction estimate which will allow us the compute the decoupling constant $K_{2n}(\Level[j]{\PartCantor{q}})$.

\begin{prop}\label{prop:no_carryover}
Fix $n \geq 1$. Let $\Ellipsephic{q}$ be an ellipsephic set without carryover.
Let $\operatorname{Digits}_q : \Level[j]{\Ellipsephic{q}}\to \{0, \dots, q-1\}^j$ be the base $q$ expansion of a number. Then
	\begin{equation*}
		A_{2n, 1}( [\Ellipsephic{q}]_j ) = A_{2n, 1}( [\Ellipsephic{q}]_1 )^j,
	\end{equation*}
	and there exists a function $f:\{0,\dots, q-1\} \to \mathbb R_{\geq 0}$ (depending on $q$ and $\digits$) such that, for all $j\in \mathbb N$ the function
	\begin{equation}\label{freimax}
		f_j(x) = \prod_{i=1}^{j} f((\operatorname{Digits}_q(x))_i)
	\end{equation}
	witnesses the value of $A_{2n, 1}( [\Ellipsephic{q}]_j )$ where here we use the notation is that given a vector $(x_{1}, \ldots, x_{j})$, $(x_{1}, \ldots, x_{j})_{i} = x_{i}$ for $1 \leq i \leq j$.
\end{prop}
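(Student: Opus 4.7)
The plan is to exploit the no-carryover hypothesis to identify $[\Ellipsephic{q}]_j$ with the product set $\digits^j \subset \Z^j$ via a Freiman isomorphism of order $n$, and then apply the tensor product proposition (\Cref{prop:tensor-decoupling}) to factor $A_{2n,1}([\Ellipsephic{q}]_j)$ as a $j$-fold power. Concretely, I would define the digit-expansion map
\begin{align*}
\phi : [\Ellipsephic{q}]_j \longrightarrow \digits^j \subset \Z^j, \qquad \phi(x) = \operatorname{Digits}_q(x) = (a_0, a_1, \dots, a_{j-1}),
\end{align*}
where $x = \sum_{s=0}^{j-1} a_s q^s$ with each $a_s \in \digits$. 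This $\phi$ is a bijection by definition of $[\Ellipsephic{q}]_j$.

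The crucial step is verifying that $\phi$ is a Freiman isomorphism of order $n$. The inverse $\phi^{-1}(a_0,\dots,a_{j-1}) = \sum_s a_s q^s$ is $\Z$-linear, hence automatically a Freiman homomorphism of every order. For $\phi$ itself, suppose $x_1 + \cdots + x_n = y_1 + \cdots + y_n$ in $\Z$ for $x_i, y_i \in [\Ellipsephic{q}]_j$. Since each digit of each $x_i$ lies in $\digits \subset \{0,\dots,d_k\}$, the digit-wise sum $\sum_i (\phi(x_i))_s$ is at most $n d_k < q$, so no carrying occurs when the sums are formed in base $q$. Therefore the base-$q$ digits of $x_1+\cdots+x_n$ are exactly $\bigl(\sum_i (\phi(x_i))_s\bigr)_{s}$, and similarly for the $y_i$'s. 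Equality in $\Z$ then forces digit-wise equality, which is precisely $\sum_i \phi(x_i) = \sum_i \phi(y_i)$ in $\Z^j$. Applying \eqref{eq:freiman-hom-decoupling} in both directions gives $A_{2n,1}([\Ellipsephic{q}]_j) = A_{2n,j}(\digits^j)$, and then $j-1$ applications of \Cref{prop:tensor-decoupling} yield
\begin{align*}
A_{2n,j}(\digits^j) = A_{2n,1}(\digits)^j = A_{2n,1}([\Ellipsephic{q}]_1)^j.
\end{align*}

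For the extremizer of product form, I would choose $f : \digits \to \R_{\geq 0}$ to be any maximizer of the one-dimensional variational problem \eqref{eq:decoupling-ellipsephic-optimization} for $S = [\Ellipsephic{q}]_1 = \digits$ (such a maximizer exists since the constraint set is compact and the objective is continuous). Tracing through the proof of \Cref{prop:tensor-decoupling}, the tensor $a \otimes a'$ attains the supremum for the product set when each factor does, so iterating gives that $f^{\otimes j} : \digits^j \to \R_{\geq 0}$, $(a_0,\dots,a_{j-1}) \mapsto \prod_{i=1}^{j} f(a_{i-1})$, witnesses $A_{2n,j}(\digits^j)$. Pulling back via $\phi$ (which, being a Freiman isomorphism of order $n$, preserves the optimization value pointwise in $a$), the function $f_j(x) = (f^{\otimes j} \circ \phi)(x) = \prod_{i=1}^{j} f\bigl((\operatorname{Digits}_q(x))_i\bigr)$ witnesses $A_{2n,1}([\Ellipsephic{q}]_j)$, which is exactly \eqref{freimax}.

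The only step that requires any care is the no-carryover verification that $\phi$ is a Freiman $n$-homomorphism; everything else is a direct appeal to results already established in the section. I expect no serious obstacle since the inequality $nd_k < q$ is tailor-made to guarantee that no digit of any $n$-fold sum overflows, making digit-wise and integer arithmetic coincide.
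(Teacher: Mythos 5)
Your proof is correct and takes essentially the same route as the paper: identify $[\Ellipsephic{q}]_j$ with $\digits^j$ via $\operatorname{Digits}_q$ as a Freiman isomorphism of order $n$ (using no carryover), apply \eqref{eq:freiman-hom-decoupling} together with \Cref{prop:tensor-decoupling}, and pull the tensor-product extremizer back along the isomorphism. The only difference is that you spell out the digit-wise verification that $\operatorname{Digits}_q$ is a Freiman $n$-isomorphism, which the paper states without proof; this is a welcome level of detail and contains no errors.
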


\begin{proof}
Since there is no carryover, the map $\operatorname{Digits}_q : [\Ellipsephic{q}]_j \to \{d_1, \dots, d_k\}^j$ defined by $\sum_{s=0}^{j-1} a_s q^s\mapsto (a_0,a_1,\ldots,a_{j-1})$ is a
Freiman isomorphism of order $n$. Hence by \eqref{eq:freiman-hom-decoupling} and \Cref{prop:tensor-decoupling},
\begin{align*}
A_{2n, 1}([\Ellipsephic{q}]_j)
=
A_{2n, j}(\{d_1, \dots, d_k\}^j)
=
A_{2n, 1}(\{d_1, \dots, d_k\})^j
.
\end{align*}
Let $f$ be the function which witnesses the value of
\begin{align*}
\sup_{\st{a: \{d_1, \ldots, d_k\} \rightarrow \R_{\geq 0}\\\sum_{\ell \in \{d_1, \ldots, d_k\}}a(\ell)^{2} = 1}}\sum_{t \in \Z}(\sum_{\st{\ell_1, \ldots, \ell_{n} \in \{d_1, \ldots, d_k\}\\\ell_1 + \cdots + \ell_n = t}}\prod_{i= 1}^{n}a(\ell_i))^{2}.
\end{align*}
Such a function exists since $\{d_1, \ldots, d_k\}$ is a finite set.
Finally since $\operatorname{Digits}_{q}$ is a Freiman isomorphism of order $n$, following a proof similar to that of \Cref{prop:freiman}
shows that $f_{j}$ as defined in \eqref{freimax} witnesses the value of $A_{2n, 1}([\Ellipsephic{q}]_{j})$.
\end{proof}

\begin{figure}[!htbp]
	\includegraphics[width=\textwidth]{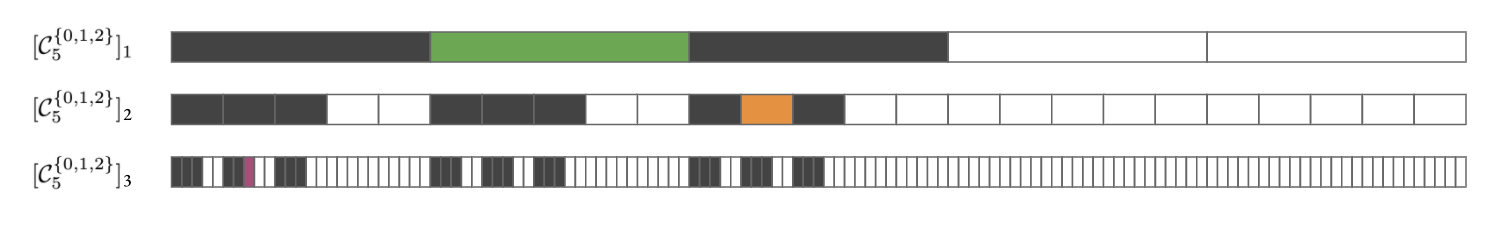}
	\begin{minipage}
		{.6\textwidth}
		\includegraphics[width=\textwidth]{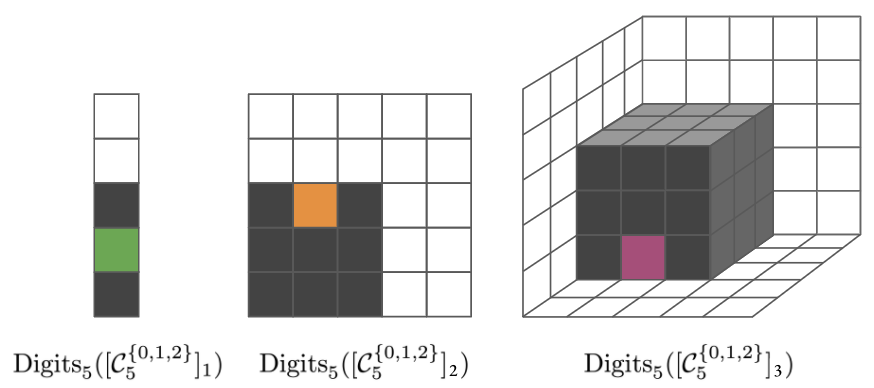}
	\end{minipage}
	\begin{minipage}
		{.39\textwidth}
		\caption{Tensor procedure described in Proposition \ref{prop:no_carryover}. Each digit in the $q$-ary expansion of $\left[\Ellipsephic q\right]_t$ is mapped to its own axis in $\mathbb Z^t$. An element of each $\left[\Ellipsephic[\{0,1,2\}] 5\right]_t$  in the figure has been highlighted both in the digit expansion and the original ellipsephic/Cantor set.}
	\end{minipage}
\end{figure}

As an immediate application of having no carryover,
we now use \Cref{prop:ellipsephic_to_cantor} and \Cref{prop:no_carryover}
to show that the decoupling constant for a Cantor subset in $[0, 1]$
not only depends on the Hausdorff dimension but also arithmetic properties
of the Cantor set.

More precisely we show the following.
\begin{prop}\label{maxexp}
Fix an integer $n \geq 1$ and fix a Hausdorff dimension $d := \frac{\log r}{\log s}$ with $0 < d < 1$ and $r, s \in \N$.
Then there exists an arithmetic Cantor set $C_{q}^{\{d_1, \ldots, d_k\}}$ of dimension $d$ such that
$$\kappa_{2n}(C_{q}^{\{d_1, \ldots, d_k\}}) \geq \frac{1}{2} - \frac{1}{2n}.$$
\end{prop}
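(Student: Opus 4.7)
The plan is to exhibit an arithmetic Cantor set of dimension $d$ whose associated ellipsephic set supports an $l^2 L^{2n}$ discrete restriction constant growing at the maximum possible rate $(k^j)^{1/2 - 1/(2n)}$. Given $d = \log r/\log s$, I would take $C_{q}^{\{0, 1, \ldots, k-1\}}$ with $k = r^m$ and $q = s^m$, so that $\dim(C) = \log r^m / \log s^m = d$ for any $m \in \mathbb{N}$. The value of $m$ is chosen later based on $n$.

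The first step is to invoke \Cref{prop:ellipsephic_to_cantor} to replace $\kappa_{2n}(C)$ by $\alpha_{2n}(\Ellipsephic{q})$, so it suffices to lower bound $A_{2n, 1}([\Ellipsephic{q}^{\{0, \ldots, k-1\}}]_j)$. Testing with the constant $a = (k^j)^{-1/2}\mathbf{1}_{[\Ellipsephic{q}]_j}$ and applying Plancherel (as in \Cref{prop:decoupling-ellipsephic-optimization}) converts the problem into lower bounding the count of solutions to $\sum \ell_i = \sum \tilde\ell_i$ with $\ell_i, \tilde\ell_i \in [\Ellipsephic{q}]_j$. A Cauchy--Schwarz bound on the $n$-fold sumset gives
\begin{equation*}
A_{2n, 1}([\Ellipsephic{q}]_j)^{2n} \geq \frac{(k^j)^{2n}}{|n \cdot [\Ellipsephic{q}]_j| \cdot (k^j)^n} \gtrsim_n (k^j)^{n-1}
\end{equation*}
where the sumset is bounded by $\sim (nk)^j$ in the no-carryover regime, and in fact smaller if one is in the carryover regime. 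This yields $A_{2n, 1}([\Ellipsephic{q}]_j) \gtrsim_n (k^j)^{1/2 - 1/(2n)}$ with an implicit constant depending on $n$ and $m$ but not on $j$.

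Dividing $\log(\cdot)$ by $\log k^j = j \log k$ and taking the $\limsup$ as $j \to \infty$, the implicit constant contributes a term $O(1/j)$ that vanishes, which gives $\kappa_{2n}(C) \geq 1/2 - 1/(2n)$. The main obstacle is the iteration constant: if one reduces to level one via \Cref{prop:no_carryover} (applicable when $n(k-1) < q$, i.e., for $m$ large), then $A_{2n, 1}([\Ellipsephic{q}]_j) = A_{2n, 1}(\{0, \ldots, k-1\})^j$, and the strict upper bound $A_{2n, 1}(\{0, \ldots, k-1\}) < k^{1/2 - 1/(2n)}$ (obtained via Hausdorff--Young interpolation $\|\widehat a\|_{2n} \leq \|a\|_1^{1-1/n}\|a\|_2^{1/n}$) forces a fixed $c^j$ loss with $c < 1$, producing a strict deficit in $\kappa_{2n}$. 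Closing this gap requires either a non-constant trial function (such as a suitably scaled Gaussian on $\{0, \ldots, k-1\}$) achieving constant $\geq 1$ in the lower bound, or exploiting a carryover regime where \Cref{prop:no_carryover} fails and the additive consolidation of $[\Ellipsephic{q}]_j$ across multiple scales is used directly.
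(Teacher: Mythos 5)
Your approach and the paper's are essentially the same: in both, one takes a digit set of the form $\{0,\dots,k-1\}$ (or its shift $\{1,\dots,k\}$) with $k=r^m$ and $q=s^m$, reduces via \Cref{prop:ellipsephic_to_cantor}, and tests the discrete restriction constant against the constant weight. The gap is in the step
\[
A_{2n, 1}([\Ellipsephic{q}]_j)^{2n} \geq \frac{(k^j)^{2n}}{|n\cdot [\Ellipsephic{q}]_j|\,(k^j)^n} \gtrsim_n (k^j)^{n-1}.
\]
The second inequality here is false with a $j$-independent constant. In the no-carryover regime $|n\cdot[\Ellipsephic{q}]_j|=(n(k-1)+1)^j$, so the middle term equals $\bigl(k^n/(n(k-1)+1)\bigr)^j$, and its ratio to $(k^j)^{n-1}$ is $\bigl(k/(n(k-1)+1)\bigr)^j$, which decays exponentially in $j$ whenever $n\ge 2$ and $k\ge 2$. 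Thus the $\limsup$ in \eqref{alphadef} cannot absorb the ``constant,'' and the argument yields only $\kappa_{2n}(C)\geq \frac12 - \frac{\log(n(k-1)+1)}{2n\log k}$, strictly below $\frac12-\frac1{2n}$ for every fixed $k$. Your final paragraph names this obstruction correctly (the ``$c^j$ loss with $c<1$''), and it is decisive: in the no-carryover regime \Cref{prop:no_carryover} gives $A_{2n,1}([\Ellipsephic{q}]_j)=A_{2n,1}(\{0,\dots,k-1\})^j$, and since the level-one constant is strictly below $k^{1/2-1/(2n)}$ (cf.\ Examples \ref{01q}, \ref{012q}: $3/2<2$, $15/7<3$), no trial function at level one can remove the deficit. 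So the proposal does not prove $\kappa_{2n}\ge \frac12-\frac1{2n}$.

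For what it is worth, the paper's proof of \Cref{maxexp} runs into the same issue: its assertion that $a\equiv 1$ gives $A_{2n,1}(\{1,\dots,r^T\})\ge (r^T)^{1/2-1/(2n)}$ is, for $n\ge2$, only valid up to a $T$-independent factor $c_n<1$ (e.g.\ the additive energy of $\{1,\dots,N\}$ is $\sim\tfrac23 N^3$ when $n=2$, not $\ge N^3$), so dividing by $T\log r$ leaves an $O_n(1/T)$ deficit. That argument therefore gives $\kappa_{2n}$ arbitrarily close to $\frac12-\frac1{2n}$ among Cantor sets of dimension $d$, but not the stated inequality for a fixed one; the obstruction you flagged applies to the source argument as well.
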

\begin{proof}
Let $T$ be large chosen later. Let $D_T := \{1, \ldots, r^T\}$ and $q_T := s^T$. Then $C_{q_T}^{D_T}$ has Hausdorff dimension
equal to $\frac{\log r^T}{\log s^T} = \frac{\log r}{\log s}$.
We can also choose $T$ so large so that $nr^{T} < s^{T}$ and so the associated ellipsephic set $\mathcal{E}_{q_T}^{D_T}$ has no carryover.
Then
\begin{align*}
\kappa_{2n}(C_{q_T}^{D_T}) = \alpha_{2n}(\mathcal{E}_{q_T}^{D_T}) &= \limsup_{J \rightarrow \infty}\frac{\log A_{2n, 1}([\E_{q_T}^{D_T}]_{J})}{\log (r^{T})^{J}}\\
& = \limsup_{J \rightarrow \infty} \frac{\log A_{2n, 1}([\E_{q_T}^{D_T}]_{1})^{J}}{\log (r^{T})^{J}}= \frac{\log A_{2n, 1}([\E_{q_T}^{D_T}]_{1})}{\log r^T}
\end{align*}
where the first equality is an application of \Cref{prop:ellipsephic_to_cantor}, the second equality
is by \eqref{alphadef}, and the third equality is because of \Cref{prop:no_carryover}.
Since if we choose $a(\ell) = 1$, $A_{2n, 1}(\{1, \ldots, r^T\}) \geq (r^{T})^{\frac{1}{2} - \frac{1}{2n}}$, the claim now follows.
\end{proof}
Note that $\kappa_{2n}(C_{q}^{\{d_1, \ldots, d_k\}}) \leq \frac{1}{2} - \frac{1}{2n}$. To see this, one
can either interpolate the estimates $D_{2}(\delta(i)) = 1$ and $D_{\infty}(\delta(i)) \leq N(i)^{1/2}$ (see \cite[Exercise 10$(iv)$]{tao-247} for an interpolation
theorem) or alternatively one can follow the same proof as in \cite[Proposition 1.12]{lguth118} for a direct proof.
Thus \Cref{maxexp} says that even though our Cantor set has small Hausdorff dimension, it can still have a decoupling constant
that is as large as possible.

We had particularly good structure when $\Ellipsephic{q}$ did not have carryover,
however the case when one has carryover is much harder.
In the general case, from a computational standpoint, the following lemma tells us that we can obtain a good approximation
on $\alpha_{2n}(\Ellipsephic{q})$ by estimating $A_{2n, 1}$ on the finite sets $[\Ellipsephic{q}]_{t}$.

\begin{prop}\label{prop: DecExp estimate}
Let $\Ellipsephic{q}$ be an ellipsephic set potentially with carryover. Let $t > \log_{q}n$. Then $\alpha_{2n}(\Ellipsephic{q})$
can be approximated by computing $A_{2n, 1}([\Ellipsephic{q}]_{t})$ with the following bound:
	\begin{equation}
		\label{eq:carryover-inequality}
|\alpha_{2n}(\Ellipsephic{q}) - \frac{\log A_{2n, 1}([\Ellipsephic{q}]_{t})}{\log k^{t}}| \leq \frac{\log(2n + 1)}{2nt\log k}.
	\end{equation}
and therefore
	\begin{equation*}
	\label{eq:carryover-limit}
\alpha_{2n}(\Ellipsephic{q}) = \lim_{t \rightarrow \infty}\frac{\log A_{2n, 1}([\Ellipsephic{q}]_{t})}{\log k^{t}}.
	\end{equation*}
\end{prop}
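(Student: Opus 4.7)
My plan is to sandwich $\alpha_{2n}(\Ellipsephic{q})$ between sub- and supermultiplicative iterates of $A_{2n,1}(\Level[t]{\Ellipsephic{q}})$. Both directions come from the digit-splitting bijection $\phi : \Level[j+t]{\Ellipsephic{q}} \to \Level[t]{\Ellipsephic{q}} \times \Level[j]{\Ellipsephic{q}}$ sending $x$ to $(x_L, x_H)$, where $x = x_L + q^t x_H$ is the unique decomposition of $x$ into its lowest $t$ base-$q$ digits and its top $j$ base-$q$ digits.

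For submultiplicativity I would apply \Cref{prop:freiman} to $\phi$ together with \Cref{prop:tensor-decoupling} to obtain
\begin{align*}
A_{2n,1}(\Level[j+t]{\Ellipsephic{q}}) \leq |D|^{1/(2n)} \, A_{2n,1}(\Level[t]{\Ellipsephic{q}}) \, A_{2n,1}(\Level[j]{\Ellipsephic{q}}),
\end{align*}
where $D$ is the defect set in \eqref{eq:freiman-D}. To bound $|D|$: if $\sum_{i=1}^{n} x_i = \sum_{i=1}^{n} y_i$ in $\Z$, then $\sum x_{i,L} - \sum y_{i,L} = q^t(\sum y_{i,H} - \sum x_{i,H})$; the left side is a multiple of $q^t$ of absolute value at most $n(q^t-1) < nq^t$, so the integer $m := \sum y_{i,H} - \sum x_{i,H}$ satisfies $|m| < n$. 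The hypothesis $t > \log_q n$ ensures $q^t > n$, which keeps the carryover confined to this clean window, and the defect elements lie in $\{(q^t m, -m) : |m| \leq n\}$, giving $|D| \leq 2n+1$. For the reverse direction, the bijection $\psi(a,b) = a + q^t b$ from $\Level[t]{\Ellipsephic{q}} \times \Level[j]{\Ellipsephic{q}}$ to $\Level[j+t]{\Ellipsephic{q}}$ is $\Z$-linear and hence a Freiman homomorphism of order $n$, so \eqref{eq:freiman-hom-decoupling} and \Cref{prop:tensor-decoupling} yield
\begin{align*}
A_{2n,1}(\Level[t]{\Ellipsephic{q}}) \, A_{2n,1}(\Level[j]{\Ellipsephic{q}}) \leq A_{2n,1}(\Level[j+t]{\Ellipsephic{q}}).
\end{align*}

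Iterating both bounds along $j = mt$ and dividing $\log(\cdot)$ by $mt \log k$, the sub- and supermultiplicative estimates pinch $\log A_{2n,1}(\Level[mt]{\Ellipsephic{q}})/(mt \log k)$ as $m \to \infty$ within $\frac{\log(2n+1)}{2nt\log k}$ of $\frac{\log A_{2n,1}(\Level[t]{\Ellipsephic{q}})}{t \log k}$. Extending from the subsequence $j = mt$ to general $j = mt + r$ with $0 \leq r < t$ uses one more application of submultiplicativity to absorb the $r$-tail, which is $o(1)$ after division by $j \log k$; this controls the full $\limsup_j$ defining $\alpha_{2n}(\Ellipsephic{q})$ and yields \eqref{eq:carryover-inequality}. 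The limit statement \eqref{eq:carryover-limit} then follows by sending $t \to \infty$. I expect the main technical obstacle to be the uniform bound $|D| \leq 2n+1$ via the carryover analysis; once that is in place the remainder is a clean Fekete-style iteration.
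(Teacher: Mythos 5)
Your proof is correct and takes essentially the same route as the paper: the paper applies \Cref{prop:freiman} once to the global digit map $\Level[jt]{\Ellipsephic q}\to\Level[t]{\Ellipsephic q}^{j}$ and chains the carryover bound through all $j-1$ digit boundaries to get $|D|\le(2n+1)^{j}$, while you apply it to the one-step split $\Level[j+t]{\Ellipsephic q}\to\Level[t]{\Ellipsephic q}\times\Level[j]{\Ellipsephic q}$ and iterate the resulting sub/supermultiplicativity, which is the same estimate organized differently. (Minor nit: your carryover analysis actually yields $|m|\le n-1$, so $|D|\le 2n-1$, which is even slightly better than the $2n+1$ you state — both suffice.)
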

\begin{proof}
Choose $t \in \N$ such that $q^{t} > n$ and note that
	$$\left [\Ellipsephic[{\left[\Ellipsephic{q}\right]_t}]{q^t}\right ]_j = \left[\Ellipsephic q \right]_{jt}.$$
Consider the bijection
\begin{align}
\label{eq:digit-qt}
\operatorname{Digit}_{q^t}: \left[\Ellipsephic q \right]_{jt} &\longrightarrow  \left[\Ellipsephic q\right]_t^j, \\
\notag  \sum_{s=0}^{j-1} a_s q^{st} &\longmapsto \left(a_0, a_1, \ldots, a_{j-1}\right)
\end{align}
For this map, the set $D$ in \eqref{eq:freiman-D} satisfies
\begin{align}\label{dinclusion}
D
\subset
\{(q^ta_1, q^ta_2-a_1, \ldots, q^ta_{j-1}-a_{j-2}, -a_{j-1}) : a_1, \dots ,a_{j-1} \in \{-n + 1, \dots, n - 1\} \}.
\end{align}
To see this, note that the inverse of $\operatorname{Digit}_{q^t}$ extends to a group homomorphism $\Z^j \to \Z$, so $D$ is contained in the kernel of this group homomorphism. Furthermore, the set $\left[\Ellipsephic q\right]_t$ is bounded above by $q^t-1$. These two observations together imply
\[
D \subset 
\{(b_{0}, \ldots, b_{j-1}) \in \Z^{j} : \sum_{s=0}^{j-1} q^{st} b_s = 0\}
\cap
[-n(q^t-1), n(q^t-1)]^j
.
\]
To show \eqref{dinclusion}, suppose $(b_{0}, \ldots, b_{j-1}) \in D$. Then $|b_s| \leq n (q^t-1)$ and
\begin{align}
\label{eq:sum-bs}
\sum_{s=0}^{j-1} q^{st} b_s = 0.
\end{align}
Taking \eqref{eq:sum-bs} modulo $q^t$ gives $b_0 \equiv 0 \pmod{q^t}$, hence, $b_0 = q^t a_1$ for some $a_1 \in \Z$. Also $|b_0| \leq n(q^t-1)$ implies $|a_1| \leq n-1$. Then taking \eqref{eq:sum-bs} modulo $q^{2t}$ gives $q^ta_1 + q^tb_1 \equiv 0 \pmod{q^{2t}}$, so $b_1 = -a_1 +q^t a_2$ for some $|a_2| \leq n-1$. %
By repeating this, we get $b_{s} = -a_s + q^t a_{s+1}$ for $s = 1, \ldots, j-2$. Finally, \eqref{eq:sum-bs} gives us $b_{j-1} = -a_{j-1}$. (We can think of the numbers $(a_1, \ldots, a_{j-1})$ as ``carryover digits.'')

Equation \eqref{dinclusion} implies $|D| \leq (2n+1)^{j}$. By \Cref{prop:freiman} and \Cref{prop:tensor-decoupling}, this tells us that
\[A_{2n, 1}([\Ellipsephic{q}]_{jt}) \leq (2n + 1)^{\frac{j}{2n}}A_{2n, 1}([\Ellipsephic{q}]_{t})^{j}.\]
Also, note that the inverse of the map \eqref{eq:digit-qt} is a Freiman homomorphism of order $n$, so by \eqref{eq:freiman-hom-decoupling}
\begin{align*}
A_{2n, 1}([\Ellipsephic{q}]_{t})^{j} \leq A_{2n, 1}([\Ellipsephic{q}]_{jt}).
\end{align*}
Applying \eqref{alphadef} to the above two inequalities then proves \eqref{eq:carryover-inequality}.

\end{proof}
\begin{rem}
Note that the right hand side of \eqref{eq:carryover-inequality} is nondecreasing in $t$ (when $n$ and $k$ are kept constant),
so increasing $t$ gives strictly better and better approximations to $\alpha_{2n}(\E_{q}^{\{d_1, \ldots, d_k\}})$.
\end{rem}

\subsection{Examples}
\label{sub:Examples}
The above results in this section allow for explicit computations (in relatively simple cases) and numerical approximations
(in the remaining, more complex cases) of the $l^2 L^{2n}$ decoupling constant associated to an arithmetic Cantor set.

To demonstrate some examples, we consider the $l^{2}L^{4}$ decoupling constant for the following arithmetic Cantor sets.
To study $K_{4}$, we first use \Cref{prop:ellipsephic_to_cantor} to reduce
to studying $A_{4, 1}$. Then we assume $q$ is sufficiently large
so that we are in the no carryover case which allows us to use \Cref{prop:no_carryover} and
\Cref{prop:decoupling-ellipsephic-optimization} which reduces to an optimization problem.

Note that if we take $a(\ell) = 1$ in the definition of $A_{4, 1}$, this amounts to studying the additive energy. In the case of an ellipsephic set, one can apply for example, \cite[Lemma 3.10]{DyatlovJin}. However this would only give a lower bound on $A_{4, 1}$ and the function defined by $a(\ell) = c$ for some $c$ is not always the optimizer of the discrete restriction problem for ellipsephic sets (see for example, \Cref{012q} below).

\begin{ex}[The $(0, 1) \pmod{q}$ arithmetic Cantor set]\label{01q}
Let $k = 2$ and $\{d_1, d_2\} = \{0, 1\}$. At each level $j$, this Cantor set has $2^j$ many intervals. By \Cref{prop:decoupling-ellipsephic-optimization},
\begin{align*}
        A_{4, 1}([\E_{q}^{\{0, 1\}}]_1)^{4}
=
\sup \left\{
(a_0^2)^2
+ (a_0a_1 + a_1a_0)^2
+ (a_1^2)^2
~\middle|~
a_0^2 + a_1^2 = 1
\right\}
=
\frac{3}{2}
\end{align*}
It is easy to see that the maximum is attained when  $a_0 = a_1 = 2^{-1/2}$.
If $q > 2$, then there is no carryover, so \Cref{prop:no_carryover} implies that
\begin{align*}
        K_{4}([\mathscr{C}_{q}^{\{0, 1\}}]_{j})^{4} \sim A_{4, 1}([\E_{q}^{\{0, 1\}}]_{j})^{4} = (3/2)^{j} = (2^{j})^{\log_{2}(3/2)}.
\end{align*}
This should be compared to the trivial bound that $K_{4}([\mathscr{C}_{q}^{\{0, 1\}}]_{j})^{4} \leq 2^{j}$.
\end{ex}

\begin{ex}[The $(0, 2) \pmod{3}$ arithmetic Cantor set]\label{02q}
Let $k = 2$ and $\{d_1, d_2\} = \{0, 2\}$. Then $[C_{q}^{\{0, 2\}}]_{j}$ is the $j$th level of the middle thirds Cantor set.
Since we are studying the $l^{2}L^{4}$ decoupling constant $K_{2 \cdot 2}([\mathscr{C}_{q}^{\{0, 2\}}]_{j})$, $n = 2$ and so the associated ellipsephic set $[\E_{3}^{\{0, 2\}}]_{j}$ has carryover.
However, note for all levels $j$, the map $\phi: [\E_{3}^{\{0, 2\}}]_{j} \rightarrow [\E_{3}^{\{0, 1\}}]_{j}$
given by $x \mapsto x/2$ is a Freiman isomorphism of order $2$ and the latter set does not have carryover.
Therefore from \Cref{prop:ellipsephic_to_cantor},
\begin{align*}
K_{4}([\mathscr{C}_{3}^{\{0, 2\}}]_{j})^{4} \sim A_{4, 1}([\mathcal{E}_{3}^{\{0, 2\}}]_{j})^{4} = A_{4, 1}([\mathcal{E}_{3}^{\{0, 1\}}]_{1})^{4} = (3/2)^{j}
\end{align*}
where the first equality is because of \eqref{eq:freiman-hom-decoupling} and
the second equality is because of Example \ref{01q}.
Therefore we have computed precisely the $l^2 L^{4}$ decoupling constant
for the middle thirds Cantor set.
\end{ex}

\begin{ex}[The $(0, 1, 2) \pmod{q}$ arithmetic Cantor set]\label{012q}
Let $k = 3$ and $\{d_1, d_2, d_3\} = \{0, 1, 2\}$. At each level $j$, this Cantor set has $3^j$ many intervals. By \Cref{prop:decoupling-ellipsephic-optimization},
\begin{align*}
A_{4, 1}&([\E_{q}^{\{0, 1, 2\}}]_{1})^{4}\\
&=
\sup \left\{
(a_0^2)^2
+ (2 a_0a_1)^2
+ (2 a_0a_2 + a_1^2)^2
+ (2 a_1a_2)^2
+ (a_2^2)^2
~\middle|~
a_0^2 + a_1^2 + a_2^2 = 1
\right\}
=
\frac{15}{7}
\end{align*}
One can check that $a_0 = a_2 = (2/7)^{1/2}, a_1 = (3/7)^{1/2}$ attains the maximum.

If $q > 4$, then there is no carryover, so \Cref{prop:no_carryover} implies that
\begin{align*}
K_{4}([\mathscr{C}_{q}^{\{0, 1, 2\}}]_{j})^{4} \sim A_{4, 1}([\E_{q}^{\{0, 1, 2\}}]_{j})^{4} = (15/7)^{j} = (3^{j})^{\log_{3}(15/7)}.
\end{align*}
This once again should be compared to the trivial bound that $K_{4}([\mathscr{C}_{q}^{\{0, 1, 2\}}]_{j})^{4} \leq 3^{j}$.

\end{ex}

\begin{ex}[The $(0,1,3) \pmod{q}$ arithmetic Cantor set]\label{013q}

Let $k = 3$ and $\{d_1, d_2, d_3\} = \{0, 1, 3\}$. At each level $j$, this Cantor set has $3^j$ many intervals. By \Cref{prop:decoupling-ellipsephic-optimization},
\begin{align*}
A_{4, 1}&([\E_{q}^{\{0, 1, 3\}}]_{1})^{4}\\
&=
\sup \left\{
(a_0^2)^2
+ (2 a_0a_1)^2
+ (a_1^2)^2
+ (2 a_0a_3)^2
+ (2 a_1a_3)^2
+ (a_3^2)^2
~\middle|~
a_0^2 + a_1^2 + a_3^2 = 1
\right\}
=
\frac{5}{3}
\end{align*}
One can check that $a_0 = a_1 = a_3 = 3^{-1/2}$ attains the maximum.

If $q > 6$, then there is no carryover, so \Cref{prop:no_carryover} implies that
\begin{align*}
K_{4}([\mathscr{C}_{q}^{\{0, 1, 3\}}]_{j})^{4} \sim A_{4, 1}([\E_{q}^{\{0, 1, 3\}}]_{j})^{4} = (5/3)^{j} = (3^{j})^{\log_{3}(5/3)}.
\end{align*}
As in the previous example, we trivially have that $K_{4}([\mathscr{C}_{q}^{\{0, 1, 3\}}]_{j})^{4} \leq 3^{j}$.
\end{ex}

\begin{ex}[Cantor sets generated by squares]\label{squares}
Let $q > 2$, $S := \{n^2, n \in \mathbb N\}$ the set of squares, and $S_q = S\cap [0,q)$ the squares less than $q$. Then:
\begin{equation}
\label{eq:1.4_in_1d}
    \lim_{q\to \infty} \alpha_{4}(\Ellipsephic[S_q]q) = 0
\end{equation}
By \Cref{main} and the definition of $\alpha$ in \eqref{alphadef}, this implies  \cite[Corollary 1.4]{biggs-quadratic} (note that in \cite{biggs-quadratic},
$q$ is restricted to be a prime number, while here, this restriction is not needed).

Equation \eqref{eq:1.4_in_1d} will follow from \Cref{prop: DecExp estimate} and a number-theoretic estimate about sums of elements in $S$.
Using \eqref{eq:carryover-inequality} with $t=1$ (we can do so since $q > 2$) and using that $\#[\Ellipsephic[S_q]q]_{1} = \lfloor \sqrt{q}\rfloor + 1$, one obtains
\begin{equation*}
|\alpha_{4}(\Ellipsephic[S_q]q) - \frac{\log A_{4, 1}([\Ellipsephic[S_q]q]_1)}{\log (\lfloor \sqrt{q}\rfloor + 1)}| \lsm \frac{1}{\log q}
\end{equation*}
where the implied constant is absolute.
Thus \eqref{eq:1.4_in_1d} will follow from
\begin{equation*}
    \lim_{q\to\infty} \frac{\log A_{4, 1}([\Ellipsephic[S_q]q]_{1})}{\log \sqrt{q}} =0
\end{equation*}
Since counting diagonal solutions shows that $A_{4, 1} \gtrsim 1$, it suffices to show that
\begin{equation}\label{a41q01}
    A_{4, 1}([\Ellipsephic[S_q]q]_{1}) \lesssim q^{o(1)}.
\end{equation}
We in fact show that the left hand side above is $\lsm \exp(O(\frac{\log q}{\log\log q}))$
where the implied constant is absolute.
Indeed, the divisor bound for $\Z[i]$ implies that
\[
\max_{0\le j\le 2q} |\{n_1,n_2\in S, n_1+n_2 = j\}|\leq \exp(O(\frac{\log q}{\log \log q}))
\]
which leads to
\begin{align*}
\sum_{t \in \Z}|\sum_{\st{\ell_1, \ell_2 \in S_{q}:\ell_1 + \ell_2 = t}}a(\ell_1)a(\ell_2)|^{2} &\lsm \exp(O(\frac{\log q}{\log\log q}))\sum_{t \in \Z}\sum_{\st{\ell_1, \ell_2 \in S_{q}:\ell_1 + \ell_2 = t}}|a(\ell_{1})|^{2}|a(\ell_2)|^{2}\\
&= \exp(O(\frac{\log q}{\log\log q}))(\sum_{\ell \in S_{q}}|a(\ell)|^{2})^{2}
\end{align*}
which proves \eqref{a41q01}.
In fact the above proof gives quantitative control on the decoupling exponent and shows $$|\alpha_{4}(\E_{q}^{S_q})| \lsm \frac{1}{\log\log q}$$
where the implied constant is absolute.
\end{ex}

\subsection{Computational results}\label{computational}

\begin{figure}[!t]
\begin{centering}
    \includegraphics[width=.6\textwidth]{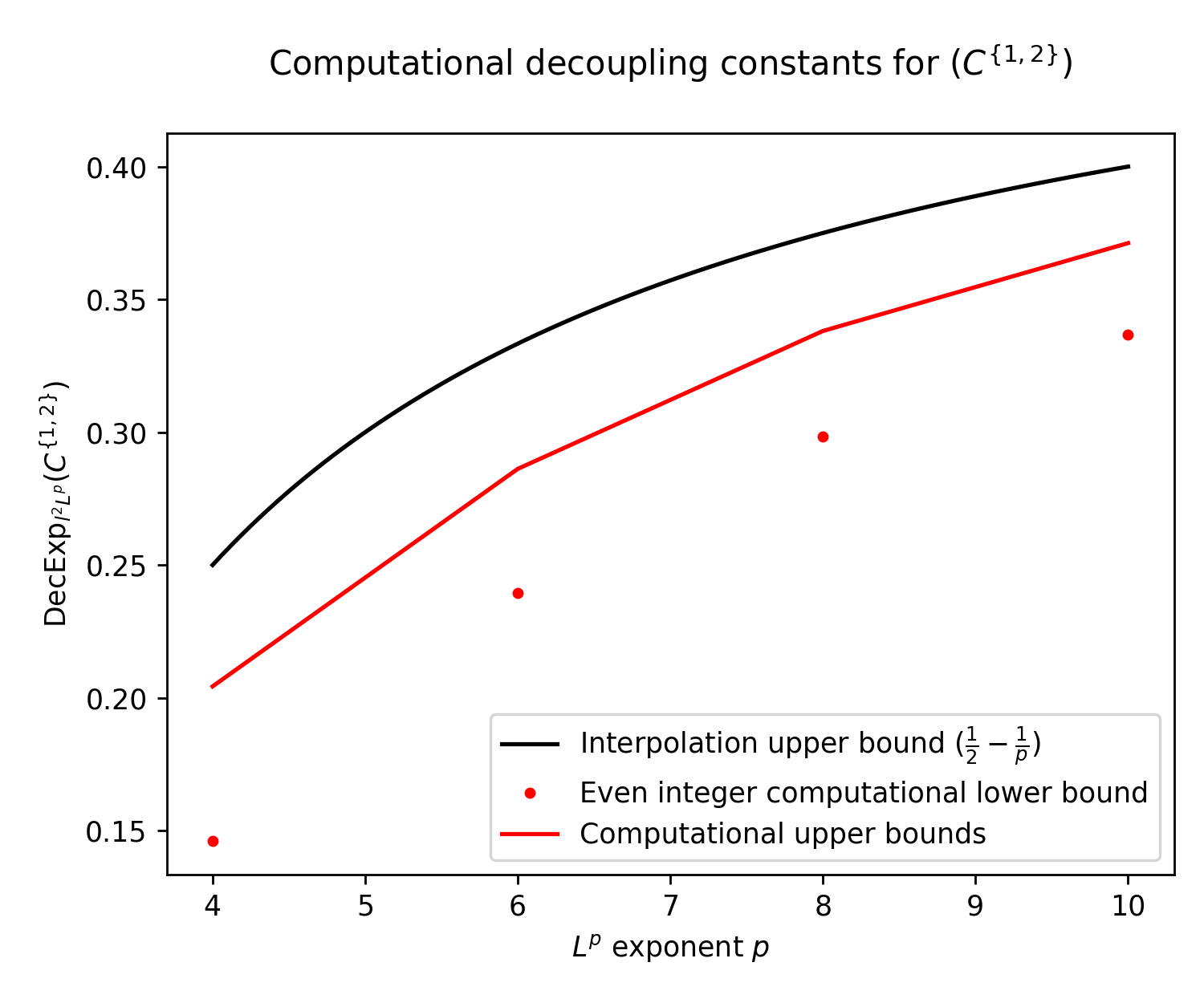}
    \caption{Numerical estimation of $\alpha_{2n}(\E_{3}^{\{1, 2\}})$. The optimization has been performed using gradient descent using Torch. At stopping time the $l^2$ gradients of the optimization where $\le 10^{-8}$. There is no guarantee, however, that the near-local-optimizers are in fact global optimizers of the problem at hand. The upper bounds on the figure (red line) are the upper bounds from \Cref{prop: DecExp estimate} assuming the optimization problem resulted in a global optimizer.}
    \label{fig:computational}
\end{centering}

\end{figure}

Proposition \ref{prop: DecExp estimate} hints of a way of estimating the decoupling exponents of Cantor sets (or at least obtaining an upper bound) by computing the value of $ \frac{\log A_{2n, 1} ([\Ellipsephic q]_t)} {\log k^t}$ for finite values of $t$. Since $[\Ellipsephic q]_t$ contains finitely many points, one may attempt to numerically find the extremizers to the decoupling inequality, in other words, to compute:

\begin{align}
A_{2n, 1} ([\Ellipsephic q]_t)^{2n}
&=
\argmax_{\substack{f \in l^2([\Ellipsephic q]_t)\\
                   \|f\|_{l^2}=1}}
    \sum_{\substack{a_1,\dots a_n \in [\Ellipsephic q]_t \\
                    b_1,\dots b_n \in [\Ellipsephic q]_t \\
                    a_1+\dots+a_n = b_1+\dots +b_n}}
        f(a_1)\dots f(a_n) \cdot \bar f(b_1)\dots \bar f(b_n)\nonumber
\\&= \label{eq:constrained_optim}
\argmax_{\substack{f \in l^2([\Ellipsephic q]_t)\\
                   \|f\|_{l^2}=1}}
    \|\underbrace{f \ast f \dots \ast f}_{n\text{ times}}\|_{l^2(\mathbb Z)}^2
\end{align}

or, as an unconstrained optimization problem,

\begin{equation}
\label{eq:unconstrained_optim}
    A_{2n, 1} ([\Ellipsephic q]_t) =
\argmax_{\substack{\text{supp }f \subseteq [\Ellipsephic q]_t}}\frac{
    \|{f \ast f \dots \ast f}\|_{l^2(\mathbb Z)}^{1/n} }{\|f\|_{l^2(\mathbb Z)}^2}
\end{equation}

We performed the numerical optimization problem in \eqref{eq:unconstrained_optim} for the $(0,2)\mod 3$ Cantor set and $n=1,2,3,4$ using gradient descent. The results can be seen in \Cref{fig:computational}.  While there are no a priori guarantees that the near-local-optimizers obtained from gradient descent are in fact global optimizers of the problem at hand, this method was tested on the previous examples in \Cref{sub:Examples}, and converged to the known decoupling exponent. \\

\subsubsection{A conjectured fixed point method} Studying equation \eqref{eq:constrained_optim}, using Lagrange multipliers one may extract information about the solution, more precisely that, at extremizers (which must exist because $l^2([\Ellipsephic q]_t)$ is a finite-dimensional space) the following equality holds:

\[
f =
\lambda\cdot \chi_{[\Ellipsephic q]_t} \cdot
\nabla
    \|\underbrace{f \ast f \dots \ast f}_{n\text{ times}}\|_{l^2(\mathbb Z)}^2
\]
where $\nabla$ denotes the gradient with respect to $f$ in $l^2([\Ellipsephic q]_t))$.
Let \[\Phi(f):= \lambda\cdot \chi_{[\Ellipsephic q]_t} \cdot
\frac{\partial}{\partial f}
    \|\underbrace{f \ast f \dots \ast f}_{n\text{ times}}\|_{l^2(\mathbb Z)}^2.\]
The functional $\Phi$ sends nonnegative functions to nonnegative functions, and by Cauchy-Schwarz we know there exists an extremizer with nonnegative components. This suggests the following numerical method to compute an extremizer:

\begin{minipage}{\textwidth}
\begin{algorithmic}
\Require TOL $>0$
\Require $f: \chi_{[\Ellipsephic q]_t} \to \mathbb R^+$
\State $n \gets 0$
  \Do
    \State   $f_{n+1} \gets \frac{\Phi(f_n)}{\|\Phi(f_n)\|_2}$
    \State $n\gets n + 1$
  \doWhile{$\|f_{n}-f_{n-1}\|>$TOL}
\end{algorithmic}
\end{minipage}

Convergence of this algorithm to an unique maximum would follow if $f\mapsto \frac{\Phi(f)}{\|\Phi(f)\|}$ was contractive in some norm. Numerical experiments seem to indicate convergence of the algorithm in all situations that were tested at a much faster rate than the gradient descent methods.

\subsubsection{Code} A commented version of the code can be found at \url{https://github.com/jaumededios/Decoupling_Cantor}.

\bibliographystyle{amsplain}
\bibliography{bibliography}
\end{document}